\newtheorem{theorem}{Theorem}[section]
\newtheorem{lemma}[theorem]{Lemma}
\newtheorem{p}{Proposition}[section]
\newtheorem{corollary}{Corollary}[section]
\theoremstyle{definition}
\newtheorem{definition}[theorem]{Definition}
\newtheorem{example}[theorem]{Example}
\theoremstyle{remark}
\newtheorem{remark}[theorem]{Remark}
\numberwithin{equation}{section}
\newcommand{\blankbox}[2]
\begin{document}
\title{ Finite irreducible modules of a class of $\mathbb{Z}^+$-graded Lie conformal algebras}

\author{ Maosen Xu, Yanyong Hong* }
\address{Department of Mathematics, Shaoxing University, Shaoxing, 312000, P.R.China} \email{390596169@qq.com}
\address{Department of Mathematics, Hangzhou Normal University,
Hangzhou, 311121, P.R.China} \email{yyhong@hznu.edu.cn}

\keywords{$\mathbb{Z}^+$-graded Lie conformal algebra, Map Virasoro conformal algebra, Block type Lie conformal algebra, Finite irreducible module}
\thanks{This work was supported by the National Natural Science Foundation of China (No. 12171129, 11871421), the Zhejiang Provincial Natural Science Foundation of China (No. LY17A010015, LY20A010022) and the Scientific Research Foundation of Hangzhou Normal University (No. 2019QDL012).}

\begin{abstract}In this paper, we introduce the notion of completely non-trivial module of a Lie conformal algebra. By this notion, we classify all finite irreducible modules of a class of $\mathbb{Z}^+$-graded Lie conformal algebras $\mathcal{L}=\bigoplus_{i=0}^{\infty} \mathbb{C}[\partial]L_i$ satisfying $
[{L_0}_\lambda L_0]=(\partial+2\lambda)L_0,$ and $[{L_1}_\lambda L_i]\neq 0$
for any $i\in \mathbb{Z}^+$. These Lie conformal algebras include Block type Lie conformal algebra $\mathcal{B}(p)$ and map Virasoro Lie conformal algebra $\mathcal{V}(\mathbb{C}[T])=Vir\otimes \mathbb{C}[T]$.
As a result, we show that all non-trivial finite irreducible modules of these algebras are free of rank one as a $\mathbb{C}[\partial]$-module.
\end{abstract}
\maketitle

\section{introduction}

In \cite{K}, the notion of Lie conformal algebra was introduced to provide an axiomatic description of
the properties of the operator product expansion in conformal field theory. There are many other fields closely related to Lie conformal algebras such as vertex algebras, linearly compact Lie algebras and integrable systems. In the view of \cite{BDK}, the notion of Lie conformal algebra is a generalization of that of classical Lie algebra, i.e. it is just the Lie algebra over a pseudo-tensor category.

The theory of finite Lie conformal algebras was studied systematically in \cite{DK}. From \cite{DK}, there are many
``conformal analog" notions and properties as ordinary Lie algebras such as simple, semisimple, solvable Lie conformal algebras. Finite irreducible modules of any finite semisimple Lie conformal algebra were classified in \cite{CK}. It was shown in \cite{K2} that any finite torsion-free solvable Lie conformal algebra has a finite faithful module. The cohomology theory of Lie conformal algebras was also developed well due to some works such as \cite{BKV}, \cite{DSK} and so on.
However, the theory of infinite Lie conformal algebras is far from being well-developed. As stated in \cite{K1}, the classification of infinite simple Lie conformal algebras of finite growth is a challenging problem. Note that the general Lie conformal algebra $gc_N$ which is a conformal analogue of general linear Lie algebra $gl_N$, is infinite and simple for any $N$. Another important problem regarding infinite Lie conformal algebras is the classification of finite irreducible modules. However, unlike Lie algebra, an infinite Lie conformal algebra may have a finite faithful module (see \cite{BKL}). In \cite{SXY} and \cite{W1}, finite irreducible modules of Block type Lie conformal algebra $\mathcal{B}(p)$ and map Virasoro Lie conformal algebra $\mathcal{V}(\mathbb{A})$ when $\mathbb{A}$ is a unital finitely generated commutative associative algebra were classified respectively. In this paper, we mainly classify finite irreducible modules of a class of $\mathbb{Z}^+$-graded Lie conformal algebras defined as follows:
\[ \mathcal{L}=\bigoplus_{i=0}^{\infty} \mathbb{C}[\partial]L_i,\ \ [{L_0}_\lambda L_0]=(\partial+2\lambda)L_0,\ \ [{L_1}_\lambda L_i]\neq 0, \forall i\geq 0. \]
Note that Block type Lie conformal algebra $\mathcal{B}(p)$ and map Virasoro Lie conformal algebra $ \mathcal{V}(\mathbb{C}[T])=Vir\otimes \mathbb{C}[T]$ belong to this class of $\mathbb{Z}^+$-graded Lie conformal algebras. To tackle this problem, we firstly introduce the notion of a completely non-trivial module. Using this and the extensions of Lie conformal algebra $\mathcal{W}(b)$ from \cite{LY}, we prove that $\mathcal{L}$ does not have any finite faithful modules. Then we can reduce our question to the classification of finite irreducible modules of a finite quotient of $\mathcal{L}$. Finally, with the help of the Cartan-Jacobson theorem for finite Lie conformal algebras, we show that any non-trivial finite irreducible $\mathcal{L}$-module must be free of rank one. 

The paper is organized as follows.

In Section 2, the basic notions and propositions, such as Cartan-Jacobson theorem for finite Lie conformal algebras, are recalled.

In Section 3, we introduce the notions of completely non-trivial module and artinian Lie conformal algebra. Then we investigate some properties of finite modules of some Lie conformal algebras containing $Vir$. In particular, we prove that map Virasoro Lie conformal algebra $\mathcal{V}(\mathbb{A})$ is artinian, and its finite non-trivial irreducible modules are free of rank one, where $\mathbb{A}$ is a unital commutative associative algebra. Moreover, we show that $\mathcal{V}(\mathbb{A})$ and $Vir\ltimes_a \text{Cur}\mathfrak{g}$ have no finite faithful modules when
$\mathbb{A}$ and $\mathfrak{g}$ are infinite dimensional.

In Section 4, using the results in Section 3, we prove that $\mathcal{L}$ does not have any faithful finite modules. In addition, all finite non-trivial irreducible modules of $\mathcal{L}$ are proved to be free of rank one. The actions of $\mathcal{L}$ on these irreducible modules are also described explicitly.

Through this paper, denote $\mathbb{C}$ and $\mathbb{Z}^+$ the sets of all complex numbers and all nonnegative integers respectively. $\mathbb{R}$ is the set of real numbers. In addition, all vector spaces and tensor products are over $\mathbb{C}$. For any vector space $V$,
we use $V[\lambda]$ to denote the set of polynomials of $\lambda$ with coefficients in $V$. Let $f(\partial, \lambda)\in \mathbb{C}[\lambda,\partial]$. We denote the total degree of $f(\partial, \lambda)$ by $deg f(\partial, \lambda)$, and the degree of $\lambda$ in $f(\partial, \lambda)$ by $deg_\lambda f(\partial, \lambda)$. For any complex number $a=x+yi$ where $x$, $y\in \mathbb{R}$, we denote
$Im(a)=yi$.

\section{Preliminary}
In this section, we recall some basic definitions and results of Lie conformal algebras and their modules. These facts can be found in \cite{DK} and \cite{K}.

\begin{definition}\label{d1} A {\bf Lie conformal algebra} $\mathcal{A}$ is a $\mathbb{C}[\partial]$-module together with a $\mathbb{C}$-linear map (call $\lambda$-bracket) $[\cdot_\lambda \cdot]: \mathcal{A} \otimes \mathcal{A} \rightarrow \mathcal{A}[\lambda]$, $a\otimes b \mapsto [a_\lambda b]$, satisfying the following axioms
\begin{equation*}
\begin{aligned}
&[\partial a_\lambda b]=-\lambda[a_\lambda b], \ [a_\lambda \partial b]=(\partial+\lambda)[a_\lambda b],\ \ (conformal\ sequilinearity),\\
&[a_\lambda b]=-[b_{-\lambda-\partial}a]\ \ (skew-symmetry),\\
&[a_\lambda [b_\mu c]]=[[a_\lambda b]_{\lambda+\mu}c]+[b_\mu[a_\lambda c]]\ \ (Jacobi \ identity),
\end{aligned}
\end{equation*}
for all $a$, $b$, $c\in \mathcal{A}$.
\end{definition}
Using conformal sequilinearity, we can define a Lie conformal algebra by giving the $\lambda$-brackets on its generators over $\mathbb{C}[\partial]$. In addition, the {\bf rank} of a Lie conformal algebra $\mathcal{A}$ is its rank as a $\mathbb{C}[\partial]$-module, i.e.
\[ rank{\mathcal{A}}=\text{dim}_{\mathbb{C}(\partial)}(\mathbb{C}(\partial)\otimes_{\mathbb{C}[\partial]}\mathcal{A}).\] We say a Lie conformal algebra {\bf finite} if it is finitely generated as a $\mathbb{C}[\partial]$-module.
\begin{example}
The {\bf Virasoro Lie conformal algebra} $Vir=\mathbb{C}[\partial]L$ is a free $\mathbb{C}[\partial]$-module of rank one, whose $\lambda$-brackets are determined by $[L_\lambda L]=(\partial+2\lambda)L$. Furthermore, it is well known that $Vir$ is a simple Lie conformal algebra.
\end{example}
In the sequel, the {\bf captain letter $L$} always means the Virasoro generator.
\begin{example}For a Lie algebra $\mathfrak{g}$, the {\bf current Lie conformal algebra} $\text{Cur}\mathfrak{g}$ is a free $\mathbb{C}[\partial]$-module $\mathbb{C}[\partial] \otimes \mathfrak{g}$ equipped with $\lambda$-brackets:
\[ [x_\lambda y]=[x,y],\ \ \text{for}\ \ x,y \in \mathfrak{g}. \]
\end{example}
\begin{example}{\label {sxe1}}
Let $\mathfrak{g}$ be a Lie algebra. We endow the direct sum of $\mathbb{C}[\partial]$-modules $Vir\ltimes_a \text{Cur}\mathfrak{g}=Vir\oplus \text{Cur}\mathfrak{g}$ with the following $\lambda$-brackets:
\[ [L_\lambda L]=(\partial+2\lambda)L,\;\;[L_\lambda x]= (\partial+a\lambda)x, \ \text{for}\ x \in \mathfrak{g},\]
\[ [x_\lambda y]=[x,y],\ \ \text{for}\ \ x,y \in \mathfrak{g}. \]
It is easy to check that $Vir\ltimes_1 \text{Cur}\mathfrak{g}$ is a Lie conformal algebra for any Lie algebra $\mathfrak{g}$ and $Vir\ltimes_a \text{Cur}\mathfrak{g}$ is a Lie conformal algebra for any $a\in \mathbb{C}$ only when $\mathfrak{g}$ is abelian.

In the sequel, for convenience, we always write this Lie conformal algebra as $Vir\ltimes_a \text{Cur}\mathfrak{g}$, which means that when $\mathfrak{g}$ is not abelian, $a=1$, otherwise, $a$ can be arbitrary.
\end{example}

Suppose that $\mathcal{A}$ is a Lie conformal algebra. For any $a,b \in \mathcal{A}$, we write
\begin{eqnarray}
\begin{array}{ll}
[a_\lambda b]=\sum_{j\in \mathbb{Z}^+}(a_{(j)}b)\frac{\lambda^j}{j!}.
\end{array}
\end{eqnarray}
For every $j\in \mathbb{Z}_+$, we have the $\mathbb{C}$-linear map: $\mathcal{A}\otimes \mathcal{A} \rightarrow \mathcal{A}$, $a\otimes b \mapsto a_{(j)}b$, which is called {\bf $j$-th product}. By using {$j$-th product}, the notions such as {\bf subalgebra}, {\bf ideal} can be defined in the general way. For example, $\text{Cur}\mathfrak{g}$ is an ideal of $Vir \ltimes_a \text{Cur}\mathfrak{g}$, and $Vir$ is a subalgebra of $Vir \ltimes_a \text{Cur}\mathfrak{g}$.

The {\bf derived algebra} of a Lie conformal algebra $\mathcal{A}$ is the vector space $\mathcal{A}'=\text{span}_{\mathbb{C}}\{a_{(n)} b\mid a\in \mathcal{A}, b \in \mathcal{A}, n\in \mathbb{Z}_+\}$. It can be seen that $\mathcal{A}'$ is an ideal of $\mathcal{A}$. Define $\mathcal{A}^{(1)}=\mathcal{A}'$ and $\mathcal{A}^{(n+1)}={\mathcal{A}^{(n)}}'$ for any $n \geq 1$. A Lie conformal algebra $\mathcal{A}$ is said to be {\bf solvable}, if there exists some $n \in \mathbb{Z}_+$ such that $\mathcal{A}^{(n)}=0$. Note that the sum of two solvable ideals of a Lie conformal algebra $A$ is still a solvable ideal. Thus if $M$ is a finite Lie conformal algebra, the maximal solvable ideal exists and is unique, which is denoted by $\text{Rad}(M)$. A Lie conformal algebra is {\bf semisimple} if its radical is zero. Thus $M/\text{Rad}(M)$ is semisimple for any finite Lie conformal algebra $M$.

By {\cite[Theorem 7.1]{DK}}, any finite semisimple Lie conformal algebra is the direct sum of the following Lie conformal algebras:
\[ Vir,\ \text{Cur}\mathfrak{g}, \ Vir \ltimes_1 \text{Cur}\mathfrak{g}, \]
where $\mathfrak{g}$ is a finite dimensional semisimple Lie algebra.

Next, let us recall some notions about modules of a Lie conformal algebra $\mathcal{A}$.

\begin{definition}{\label{d2}} For a Lie conformal algebra $\mathcal{A}$, a {\bf conformal $\mathcal{A}$-module} $M$ is a $\mathbb{C}[\partial]$-module with a $\mathbb{C}$-linear map $\mathcal{A} \otimes M \rightarrow M[\lambda]$, $a\otimes m \mapsto a_\lambda m$, satisfying the following axioms:
\begin{equation*}
\begin{aligned}
&\partial a_\lambda m=-\lambda(a_\lambda m), \ \ a_\lambda \partial m=(\partial+\lambda)a_\lambda m,\\
&a_\lambda (b_\mu m)=[a_\lambda b]_{\lambda+\mu}m+b_\mu(a_\lambda m),
\end{aligned}
\end{equation*}
for all $a$, $b\in \mathcal{A}$ and $m\in M$.
\end{definition}

For convenience, the conformal $\mathcal{A}$-modules are simply called {\bf $\mathcal{A}$-modules}. If an $\mathcal{A}$-module $M$ is finitely generated as a $\mathbb{C}[\partial]$-module, then we call it a {\bf finite} $\mathcal{A}$-module. The {\bf rank} of $M$ is its rank as a $\mathbb{C}[\partial]$-module. In this paper, we say that a $\mathcal{A}$-module $M$ is {\bf free}, if $M$ is free as a $\mathbb{C}[\partial]$-module. The notions of a submodule and an irreducible submodule are defined by using $j$-th products in an obvious way.

An $\mathcal{A}$-module $M$ is said to be {\bf trivial} if $a_\lambda m=0$ for any $a\in\mathcal{A}$ and $m\in M$.
Suppose that $M$ is a finite $\mathcal{A}$-module. Let \[ \text{Tor}M:=\{m\in M \mid f(\partial)m=0\ \text{for}\ \text{some nonzero}\ f(\partial) \in \mathbb{C}[\partial]\}.\] Then $\text{Tor}(M)$ is a trivial $\mathcal{A}$-submodule of $M$ by \cite[Lemma 8.2]{DK}. Thus any finite dimensional $\mathcal{A}$-submodule of $M$ is trivial.
Let $\mathbb{C}_u=\mathbb{C}$ as a vector space for some $u\in\mathbb{C}$. Then $\mathbb{C}_u$ becomes a one-dimensional $\mathcal{A}$-module with
$\partial c=uc$ and $a_\lambda c=0$ for any $a\in \mathcal{A}$ and any $c \in \mathbb{C}_u$. Since any finite dimensional $\mathcal{A}$-submodule of $M$ is trivial, any irreducible finite dimensional $\mathcal{A}$-module is in such form. Hence we mainly focus on studying free and finite $\mathcal{A}$-modules in the sequel.
\begin{example} {\bf Regular module}. For any Lie conformal algebra $\mathcal{A}$, we can regard $\mathcal{A}$ as an $\mathcal{A}$-module with respect to the $\lambda$-brackets of $\mathcal{A}$. Thus $\text{Tor}(\mathcal{A})$ is in the center of $\mathcal{A}$.
\end{example}
Suppose that $M$ is a finite $\mathcal{A}$-module. Consider the natural semi-direct product Lie conformal algebra $\mathcal{A}\ltimes M$. Then $\text{Tor}(\mathcal{A})$ is in the center of $\mathcal{A}\ltimes M$. Thus $\text{Tor}(\mathcal{A})$ acts trivially on $M$.

\begin{p}{\label{p1}}\cite{CK} Any non-trivial finite free $Vir$-module $V$ has a free rank one submodule $M_{a,b}=\mathbb{C}[\partial]v$ for some $v \in V$, $a,b \in \mathbb{C} $, given by
\[ L_\lambda v=(\partial+a\lambda+b)v. \]
$V$ is irreducible if and only if $a\neq 0$.
\end{p}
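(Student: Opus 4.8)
The plan is to boil the module structure down to a single functional equation coming from the Jacobi identity and then produce an explicit rank-one submodule. Since $Vir=\mathbb{C}[\partial]L$ is free of rank one, the action is determined by the map $m\mapsto L_\lambda m$, and from $[L_\lambda L]=(\partial+2\lambda)L$ one computes $[L_\lambda L]_{\lambda+\mu}m=(\lambda-\mu)L_{\lambda+\mu}m$, so the module axiom of Definition \ref{d2} becomes
\[
L_\lambda(L_\mu m)-L_\mu(L_\lambda m)=(\lambda-\mu)L_{\lambda+\mu}m .
\]
Fixing a $\mathbb{C}[\partial]$-basis of the free module $V$, the operator $L_\lambda$ is encoded by a matrix $A(\partial,\lambda)$ over $\mathbb{C}[\partial,\lambda]$, and the displayed relation turns into
\[
A(\partial,\lambda)A(\partial+\lambda,\mu)-A(\partial,\mu)A(\partial+\mu,\lambda)=(\lambda-\mu)A(\partial,\lambda+\mu),
\]
where the shift $\partial\mapsto\partial+\lambda$ records conformal sesquilinearity. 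Setting $\mu=0$ and writing $B(\partial)=A(\partial,0)$ gives the auxiliary identity $A(\partial,\lambda)B(\partial+\lambda)-B(\partial)A(\partial,\lambda)=\lambda A(\partial,\lambda)$.

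The heart of the argument is to produce a vector $v\in V$ (after extending scalars to $\mathbb{C}(\partial)$ and clearing denominators if necessary) on which $L_\lambda$ acts by a scalar, $L_\lambda v=f(\partial,\lambda)v$. I would do this through the coefficient operators $\ell_k:=L_{(k+1)}$, $k\ge -1$, which satisfy $[\ell_p,\ell_q]=(p-q)\ell_{p+q}$ and for which $z:=\partial+\ell_{-1}$ commutes with every $\ell_k$. Thus $V$ carries an action of the Witt-type algebra $\langle \ell_{-1},\ell_0,\ell_1,\dots\rangle$ containing a copy of $\mathfrak{sl}_2=\langle\ell_{-1},\ell_0,\ell_1\rangle$, and the key point is that finiteness of $V$ as a $\mathbb{C}[\partial]$-module forces the positive part $\langle\ell_1,\ell_2,\dots\rangle$ to act locally nilpotently; this is where the finite-rank hypothesis is genuinely used. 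Using the central element $z$ together with $\mathfrak{sl}_2$-representation theory, one extracts a singular vector $v$ with $\ell_k v=0$ for $k\ge 1$ and $\ell_0 v=av$, so that $L_\lambda v=\ell_{-1}v+a\lambda v$; such a $v$ generates a free rank-one submodule. I expect controlling the positive part via finiteness --- equivalently, bounding $\deg_\lambda A$ and analyzing the leading $\lambda$-coefficient in the matrix identity above --- to be the main obstacle.

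On such a rank-one submodule the functional equation becomes scalar: $f(\partial+\lambda,\mu)f(\partial,\lambda)-f(\partial+\mu,\lambda)f(\partial,\mu)=(\lambda-\mu)f(\partial,\lambda+\mu)$. Specializing $\mu=0$ and cancelling the nonzero factor $f$ (we are in an integral domain) yields $f(\partial+\lambda,0)-f(\partial,0)=\lambda$, whence $f(\partial,0)=\partial+b$; feeding this back and comparing coefficients of $\lambda^k$ for $k\ge 2$ forces them to vanish, so $f(\partial,\lambda)=\partial+a\lambda+b$, which is exactly $M_{a,b}$. Finally I would settle irreducibility by a direct divisibility check: a proper nonzero submodule of $\mathbb{C}[\partial]v$ has the form $\mathbb{C}[\partial]p(\partial)v$ with $\deg p\ge 1$, and stability under $L_\lambda$ requires $p(\partial)\mid p(\partial+\lambda)(\partial+a\lambda+b)$ in $\mathbb{C}[\partial,\lambda]$. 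Evaluating at a root $\alpha$ of $p$ shows this forces $\partial+a\lambda+b$ to vanish there for all $\lambda$, i.e. $a=0$ and $\alpha=-b$; hence $M_{a,b}$ is irreducible precisely when $a\neq 0$, while for $a=0$ the submodule $\mathbb{C}[\partial](\partial+b)v$ is proper. Since a nonzero submodule of an irreducible module is everything, any nontrivial finite irreducible $V$ coincides with such an $M_{a,b}$ with $a\neq 0$.
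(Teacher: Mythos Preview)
The paper does not prove this proposition; it is quoted from \cite{CK} without argument, so there is no ``paper's proof'' to compare against. Your outline is essentially the standard Cheng--Kac approach via the extended annihilation algebra, and both the scalar functional-equation computation and the irreducibility check at the end are correct.

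Two points need tightening, one minor and one substantive. First, the central element should be $z=\ell_{-1}-\partial$ (or its negative), not $\partial+\ell_{-1}$: from $[\partial,\ell_k]=-(k+1)\ell_{k-1}$ and $[\ell_{-1},\ell_k]=-(k+1)\ell_{k-1}$ it is the \emph{difference} that commutes with every $\ell_k$. Second, and more importantly, the conditions $\ell_k v=0$ for $k\ge 1$ and $\ell_0 v=av$ alone do \emph{not} force $\mathbb{C}[\partial]v$ to be a submodule: you write $L_\lambda v=\ell_{-1}v+a\lambda v$, but $\ell_{-1}v$ is a priori just some element of $V$, and $\mathbb{C}[\partial]v$ is $L$-stable only if $\ell_{-1}v\in\mathbb{C}[\partial]v$, i.e.\ only if $v$ is also a $z$-eigenvector. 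The fix is the one you hint at but do not carry out: since $z$ is $\mathbb{C}[\partial]$-linear (because $[\partial,\ell_{-1}]=0$) and commutes with all $\ell_k$, it preserves the singular space and commutes with $\ell_0$ there; once one knows the relevant eigenspace is finite-dimensional over $\mathbb{C}$ (this is the genuine use of finiteness you flag as the main obstacle), one may choose $v$ to be a simultaneous eigenvector for $\ell_0$ and $z$, yielding $L_\lambda v=(\partial+a\lambda+b)v$ directly and making the subsequent functional-equation step for $f(\partial,0)$ redundant. With these adjustments your argument is complete.
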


Let us recall the (extended) annihilation Lie algebra of a Lie conformal algebra. The {\bf annihilation Lie algebra} $\text{Lie}( \mathcal{A})^+$ of a Lie conformal algebra $ \mathcal{A}$ is the vector space $\text{span}_\mathbb{C}\{a_{(n)}\mid a \in \mathcal{A} , \ n\in \mathbb{Z}^+\}$ with relations
\[(\partial a)_{(n)}=-na_{(n-1)},\ (a+b)_{(n)}=a_{(n)}+b_{(n)},\ (ka)_{(n)}=ka_{(n)}, \]for $a,b \in \mathcal{A}$ and $k \in \mathbb{C}$,
and the Lie brackets of $\text{Lie}( \mathcal{A})^+$ are given by
\[ [a_{(m)},b_{(n)}]= \sum\limits_{i\in \mathbb{Z}^+} \binom m i (a_{(i)}b)_{(m+n-i)}. \]
The {\bf extended annihilation Lie algebra} $\text{Lie}( \mathcal{A})^e$ is the semi-direct product of Lie algebras $\mathbb{C}\partial \ltimes \text{Lie}( \mathcal{A})^+$ with the brackets $[\partial, a_{(n)}]=-na_{(n-1)}$.
\begin{example}The Lie algebra of regular vector fields over $\mathbb{C}$ can be recognized as the annihilation Lie algebra of $Vir$. Suppose $Vir=\mathbb{C}[\partial]L$ is the Virasoro Lie conformal algebra. Then $\text{Lie}({Vir})^+$ is a Lie algebra with a basis $\{L_{i}\}_{i\in \mathbb{Z}^+}$ such that
\[ [L_{(m)},L_{(n)}]=(m-n)L_{(m+n-1)}, \]
for $\ m,n \in \mathbb{Z}^+$.
\end{example}
There is a deep connection between the module category of Lie conformal algebras and a special module category of extended annihilation Lie algebras.
We say that a $\text{Lie}( \mathcal{A})^+$-module $V$ is {\bf conformal} if for any $v \in V$ and $ a \in \mathcal{A}$, there exists $N \in \mathbb{Z}_+$ such that $a_{(n)}v=0$ when $n>N$. A $\text{Lie}( \mathcal{A})^e$-module $V$ is called {\bf conformal} if it is conformal as a $\text{Lie}( \mathcal{A})^+$-module. Now, we can state the following proposition.
\begin{p}{\label{p5}}\cite{CK}
Let $V$ be an $\mathcal{A}$-module. For any $a\in \mathcal{A}$ and $v\in V$, write
\[ a_\lambda v=\sum_{j\in \mathbb{Z}^+}(a_{(j)}v)\frac{\lambda^j}{j!}.\]
Then $V$ is a conformal $\text{Lie}( \mathcal{A})^e$-module with the action:
\[a_{(n)}\cdot v=a_{(n)}v,\ \ \partial\cdot v=\partial v.\]
Conversely, if $V$ is a conformal $\text{Lie}( \mathcal{A})^+$-module, then $V$ is also an $\mathcal{A}$-module with the action:
\begin{eqnarray}{\label{sxpp}}
\begin{array}{ll}
a_\lambda v=\sum_{j\in \mathbb{Z}^+}(a_{(j)}\cdot v)\frac{\lambda^j}{j!}.
\end{array}
\end{eqnarray}
In particular, $V$ is a finite irreducible $\mathcal{A}$-module if and only if $V$ is also a finite irreducible conformal $\text{Lie}( \mathcal{A})^e$-module.
\end{p}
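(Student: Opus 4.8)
The plan is to establish the correspondence in both directions by expanding each defining identity of a conformal module as a power series in $\lambda$ (and $\mu$) and matching coefficients, which are precisely the $j$-th products $a_{(j)}$. I would treat the two directions as mutually inverse bookkeeping operations on the same generating series, and then read off the statement about irreducible modules from the resulting identification of submodule lattices.

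First, starting from an $\mathcal{A}$-module $V$, I would set $a_{(n)}\cdot v := a_{(n)}v$ and $\partial\cdot v := \partial v$ and check that these respect all defining relations of $\text{Lie}(\mathcal{A})^e$. Writing $a_\lambda v=\sum_{j}(a_{(j)}v)\lambda^j/j!$ and comparing the coefficient of $\lambda^n$ in the sesquilinearity axiom $\partial a_\lambda m=-\lambda(a_\lambda m)$ immediately yields $(\partial a)_{(n)}v=-n\,a_{(n-1)}v$, which is exactly the relation $(\partial a)_{(n)}=-n a_{(n-1)}$; linearity in $a$ is clear. The relation $[\partial,a_{(n)}]=-na_{(n-1)}$ on $V$ follows in the same way from the axiom $a_\lambda \partial m=(\partial+\lambda)a_\lambda m$ after moving $\partial$ across. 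Conformality is automatic, since $a_\lambda v$ is a polynomial in $\lambda$ and hence $a_{(n)}v=0$ for $n$ large.

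The key step is the bracket relation. I would expand the third module axiom $a_\lambda(b_\mu m)=[a_\lambda b]_{\lambda+\mu}m+b_\mu(a_\lambda m)$ as a double power series in $\lambda$ and $\mu$. The first and last terms assemble the commutator $[a_{(p)},b_{(q)}]$ as the coefficient of $\lambda^p\mu^q/(p!\,q!)$, while the middle term, after writing $[a_\lambda b]_{\lambda+\mu}m=\sum_{i,k}(a_{(i)}b)_{(k)}m\,\lambda^i(\lambda+\mu)^k/(i!\,k!)$ and expanding $(\lambda+\mu)^k$ by the binomial theorem, contributes the $(a_{(i)}b)_{(k)}$ terms. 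Matching the coefficient of $\lambda^m\mu^n$ and collapsing the resulting binomial sum (substituting $i=m+n-k$ and using $\binom{k}{k-n}=\binom{k}{n}$) produces exactly $[a_{(m)},b_{(n)}]=\sum_i\binom{m}{i}(a_{(i)}b)_{(m+n-i)}$. This binomial manipulation is the one genuinely computational point, and it is where I expect the bookkeeping to demand the most care.

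For the converse, given a conformal $\text{Lie}(\mathcal{A})^+$-module, the formula (\ref{sxpp}) defines $a_\lambda v$; conformality guarantees the sum is finite, so $a_\lambda v\in V[\lambda]$. Reversing each computation above — reading $(\partial a)_{(n)}=-na_{(n-1)}$ back into the first sesquilinearity axiom, $[\partial,a_{(n)}]=-na_{(n-1)}$ into the second, and the bracket relation into the Jacobi-type axiom — shows $V$ is an $\mathcal{A}$-module, and the two constructions are visibly mutually inverse, since both amount to coefficient extraction from the same generating series. Finally, because the $\mathbb{C}[\partial]$-module structure (the action of $\partial$) is identical on both sides, a subspace is an $\mathcal{A}$-submodule precisely when it is $\partial$-invariant and $a_{(n)}$-invariant for all $a,n$, i.e. a $\text{Lie}(\mathcal{A})^e$-submodule; hence the two submodule lattices coincide, and both finiteness (as a $\mathbb{C}[\partial]$-module) and irreducibility transfer verbatim, giving the last assertion.
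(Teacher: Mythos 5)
Your proof is correct and is essentially the standard argument from \cite{CK}, which the paper cites without reproducing: one expands each module axiom as a (double) power series in $\lambda$ and $\mu$ and matches coefficients, the only genuinely computational point being the binomial collapse that recovers $[a_{(m)},b_{(n)}]=\sum_i\binom{m}{i}(a_{(i)}b)_{(m+n-i)}$, and the identification of the two submodule lattices (a subspace is an $\mathcal{A}$-submodule iff it is $\partial$-invariant and $a_{(n)}$-invariant) then yields the final claim about finite irreducible modules. The one point worth tightening is in the converse direction: a $\text{Lie}(\mathcal{A})^+$-module carries no action of $\partial$ a priori, so to produce the $\mathbb{C}[\partial]$-module structure that Definition \ref{d2} requires you should work with conformal $\text{Lie}(\mathcal{A})^e$-modules throughout (as the last clause of the statement in fact does); this imprecision is inherited from the statement itself rather than introduced by your argument.
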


The following Cartan-Jacobson theorem for finite Lie conformal algebras was proved in \cite[Proposition 8.1]{DK}, which plays a key role in the representation theory of finite Lie conformal algebras.

\begin{p}{\label{cj}} If a finite Lie conformal algebra $\mathcal{A}$ has a faithful and irreducible finite module, then $\mathcal{A}$ must be isomorphic to one of the following cases:
\begin{enumerate}
\item $\text{Cur} \mathfrak{g}$, where $\mathfrak{g}$ is a non-zero reductive Lie algebra whose center is at most
one-dimensional;
\item $Vir\ltimes_1 \text{Cur}\mathfrak{g}$,where $\mathfrak{g}$ as in (i) or zero.
\end{enumerate}
\end{p}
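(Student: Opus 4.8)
The plan is to reduce the structural statement to the already-recalled classification of finite semisimple Lie conformal algebras (\cite[Theorem 7.1]{DK}) by showing that a faithful irreducible finite module forces the solvable radical to collapse to a central piece. Write $R=\text{Rad}(\mathcal{A})$ and let $M$ be the given faithful irreducible finite $\mathcal{A}$-module. Since $\mathcal{A}\neq 0$ and $M$ is faithful, $M$ is non-trivial, so $\text{Tor}(M)=0$ by irreducibility and $M$ is free over $\mathbb{C}[\partial]$. The central first step is to prove a conformal analogue of Lie's theorem: if a solvable finite Lie conformal algebra acts on a nonzero finite free module, then there is a common ``weight'' along which its derived subalgebra acts nilpotently. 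Applying this to the ideal $R$ acting on $M$, and using that the corresponding weight space is $\mathcal{A}$-stable because $R$ is an ideal (via the Jacobi identity for the $\lambda$-action), irreducibility of $M$ forces $M$ to be a single such weight space. Consequently the derived ideal $R'$ annihilates $M$; since $R'$ is an ideal of $\mathcal{A}$ and $M$ is faithful, we get $R'=0$, so $R$ is abelian.

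Second, I would pin down the abelian radical. Acting on the irreducible $M$ through a single weight, $R$ acts by a conformal character (a $\mathbb{C}[\lambda,\partial]$-valued functional); faithfulness then forces $R$ to reduce to at most a one-dimensional central ideal over $\mathbb{C}[\partial]$, and in particular to commute with all of $\mathcal{A}$. Passing to the semisimple quotient $S=\mathcal{A}/R$, Theorem 7.1 of \cite{DK} says $S$ is a direct sum of copies of $Vir$, $\text{Cur}\,\mathfrak{h}$ (for $\mathfrak{h}$ finite-dimensional semisimple) and $Vir\ltimes_1\text{Cur}\,\mathfrak{h}$. I would then show that the central remnant of $R$ merges with a current factor to produce exactly a reductive $\mathfrak{g}=\mathfrak{h}\oplus(\text{center})$ whose center is at most one-dimensional; this is the source of the ``center at most one-dimensional'' clause, precisely as in the classical Cartan--Jacobson theorem, via a Schur-type argument showing the center acts by scalars on the irreducible $M$.

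Third, I would rule out more than one Virasoro summand. If $S$ contained two commuting copies $Vir\oplus Vir$, then on the finite free module each acts, on a rank-one piece, by $L_\lambda v=(\partial+a\lambda+b)v$ as in Proposition \ref{p1}; comparing the two commuting actions on the module (which the $Vir$-module theory forces to be rank one) yields a contradiction with simultaneous irreducibility and faithfulness. The same comparison forbids a free-standing summand $\text{Cur}\,\mathfrak{h}$ from coexisting with a $Vir$ factor unless they are glued into $Vir\ltimes_1\text{Cur}\,\mathfrak{h}$, leaving precisely cases (i) and (ii) of the statement.

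The main obstacle I expect is the conformal Lie's theorem of the first step together with the transfer of faithfulness: unlike the classical setting the $\lambda$-action is polynomial-valued, so ``acting by a character'' must be formulated as a $\mathbb{C}[\lambda,\partial]$-valued functional, and showing that the weight space for the ideal $R$ is genuinely $\mathcal{A}$-invariant and that $R'$ acts by zero requires a careful Engel-type induction phrased in terms of the $j$-th products rather than a single bracket. Upgrading ``$R$ abelian'' to ``$R$ central'' is the delicate point, and is exactly where finiteness of $M$ and the rank-one rigidity of $Vir$-modules from Proposition \ref{p1} are used.
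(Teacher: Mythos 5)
You should first be aware that the paper does not prove Proposition \ref{cj} at all: it is quoted verbatim from \cite[Proposition 8.1]{DK}, so the paper's ``proof'' is a citation. Your proposal is therefore an attempt to reprove a theorem of D'Andrea--Kac from scratch. The overall strategy --- control the radical, then reduce to the classification of finite semisimple Lie conformal algebras --- is the natural one and is broadly parallel to the route taken in \cite{DK}, but as written the proposal has genuine gaps rather than being a complete argument.

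The main gap is your ``central first step'': a conformal analogue of Lie's theorem for a solvable finite Lie conformal algebra acting on a finite free module. This is itself a substantial theorem of \cite{DK}, whose proof is delicate precisely because the ``characters'' are $\mathbb{C}[\lambda]$-valued and must satisfy $\phi(\partial r)=-\lambda\phi(r)$; it cannot simply be asserted. The same applies to the invariance of the $R$-weight space under all of $\mathcal{A}$: in the conformal setting the weight is shifted by the $\lambda$-action of the normalizer, so the classical ``invariance of weights'' lemma has to be redone in terms of $j$-th products, and you do not do this. Second, even granting that $\text{Rad}(\mathcal{A})$ is abelian of rank at most one, concluding that $\mathcal{A}$ is $\text{Cur}\,\mathfrak{g}$ or $Vir\ltimes_1 \text{Cur}\,\mathfrak{g}$ with $\mathfrak{g}$ reductive requires (i) that the extension of the semisimple quotient by this rank-one piece splits as an extension of conformal algebras, and (ii) that the $Vir$ factor acts on it by $(\partial+\lambda)$ so that it genuinely merges into a current factor; neither follows from a Schur-type argument alone, and this is exactly where rank-one module rigidity (Proposition \ref{p1}) and low-degree extension/cohomology computations enter. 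Finally, your exclusion of a $Vir\oplus Vir$ summand tacitly assumes the module contains a common rank-one piece on which both copies act by $(\partial+a\lambda+b)$; making that rigorous essentially invokes the Cheng--Kac classification of finite irreducible modules over finite semisimple conformal algebras \cite{CK}, at which point one is already most of the way to the cited theorem. In short, the roadmap is reasonable and close in spirit to the actual proof in \cite{DK}, but each step you flag with ``I would show'' is a theorem in its own right, so what you have is a sketch, not a proof.
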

\section{ Finite modules of some Lie conformal algebras containing $Vir$}

In this section, we introduce the notions of completely non-trivial module and artinian Lie conformal algebra. Then we investigate some properties of finite modules of some Lie conformal algebras containing $Vir$ including map Virasoro Lie conformal algebra $\mathcal{V}(\mathbb{A})$ and $Vir\ltimes_a \text{Cur}\mathfrak{g}$, where $\mathbb{A}$ is a unital commutative associative algebra and $\mathfrak{g}$ is a Lie algebra.

Let us fix the notation: $Vir=\mathbb{C}[\partial]L$, where $[L_\lambda L]=(\partial+2\lambda)L$.

\begin{definition}Suppose that $\mathcal{A}$ is a Lie conformal algebra and $V$ is a finite free $\mathcal{A}$-module. A finite chain of $\mathcal{A}$-submodules of $V$
\begin{equation*}
0=V_0 \subset V_1 \subset V_2 \subset \cdots \subset V_t\subset V,\;\;\; (**)
\end{equation*}
is called a {\bf free compositions series of non-trivial index $t$} provided that $V/V_{t}$ is a trivial free $\mathcal{A}$-module and $V_{i}/V_{i-1}$ $(i=1,2,\cdots, t)$ is free as a $\mathbb{C}[\partial]$-module which is an extension of a torsion $\mathbb{C}[\partial]$-module (regarded as a trivial $\mathcal{A}$-module) by a non-trivial irreducible $\mathcal{A}$-module. If $V_t=V$, then $V$ is called {\bf completely non-trivial}. In addition, the {\bf non-trivial index of $V$}, denoted by $n(V)$, is the following number \[ min\{t| t\ \text{is the non-trivail index of some free composition series of}\ V\}.\]
\end{definition}
\begin{remark} Suppose that a finite free $\mathcal{A}$-module $V$ has a free composition series $(**)$. Then $V\cong \bigoplus_{i=1}^t V_i/V_{i-1} \oplus V/V_t$ as $\mathbb{C}[\partial]$-modules.
\end{remark}
\begin{definition}We say that a Lie conformal algebra $\mathcal{A}$ is {\bf artinian} if any finite non-trivial free $\mathcal{A}$-module $V$ always has a non-trivial irreducible $\mathcal{A}$-submodule. \end{definition}
From [CK], we can see that any finite semisimple Lie conformal algebra is artinian. Moreover, we have the following proposition.
\begin{p}\label{art1} Any finite free module $V$ of an artinian Lie conformal algebra $\mathcal{A}$ has a free compositions series.\end{p}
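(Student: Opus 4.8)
The plan is to argue by induction on the rank of $V$ as a $\mathbb{C}[\partial]$-module. If $V$ is trivial (in particular if its rank is zero), the chain $0 = V_0 \subset V$ already serves as a free composition series of non-trivial index $0$, since $V/V_0 = V$ is then a trivial free module. So assume $V$ is non-trivial.

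Since $\mathcal{A}$ is artinian and $V$ is finite, non-trivial and free, there is a non-trivial irreducible $\mathcal{A}$-submodule $W \subseteq V$. First I would check that $W$ is free of positive rank: because $W$ is irreducible and non-trivial, $\text{Tor}(W)$ must vanish (otherwise it would be a proper nonzero trivial submodule, forcing $W = \text{Tor}(W)$ to be trivial), so $W$ is finitely generated and torsion-free, hence free over the principal ideal domain $\mathbb{C}[\partial]$, with $\text{rank}(W) \geq 1$.

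The key construction is the \emph{saturation} of $W$: set
\[ V_1 = \{v \in V \mid f(\partial) v \in W \text{ for some nonzero } f(\partial) \in \mathbb{C}[\partial]\}, \]
the preimage of $\text{Tor}(V/W)$ under the projection $V \to V/W$. Then $V_1$ is an $\mathcal{A}$-submodule, and being a submodule of the free module $V$ it is itself free. Moreover the short exact sequence $0 \to W \to V_1 \to V_1/W \to 0$ exhibits $V_1$ as an extension of the torsion $\mathbb{C}[\partial]$-module $V_1/W = \text{Tor}(V/W)$ --- which is a trivial $\mathcal{A}$-module by the cited fact that $\text{Tor}$ is always trivial --- by the non-trivial irreducible module $W$. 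This is exactly the condition required of $V_1/V_0$. By construction $V/V_1 \cong (V/W)/\text{Tor}(V/W)$ is torsion-free and finitely generated, hence free.

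Finally I would complete the induction. Since $\text{rank}(V/V_1) = \text{rank}(V) - \text{rank}(W) < \text{rank}(V)$ and $V/V_1$ is finite and free over the artinian algebra $\mathcal{A}$, the induction hypothesis yields a free composition series $0 = \bar{V}_0 \subset \bar{V}_1 \subset \cdots \subset \bar{V}_s \subset V/V_1$. Pulling it back along $\pi \colon V \to V/V_1$ and prepending $V_1$ gives the chain $0 \subset V_1 \subset \pi^{-1}(\bar{V}_1) \subset \cdots \subset \pi^{-1}(\bar{V}_s) \subset V$; the third isomorphism theorem identifies each successive quotient with the corresponding $\bar{V}_j/\bar{V}_{j-1}$ and identifies $V/\pi^{-1}(\bar{V}_s)$ with the trivial free module $(V/V_1)/\bar{V}_s$, so all the required properties are inherited. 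I expect the only real subtlety to be the correct handling of torsion: a naive chain through $W$ itself would leave $V/W$ with torsion and hence not free, and it is precisely the passage to the saturation $V_1$ that keeps every quotient free while depositing the unavoidable torsion as the trivial top piece of each extension.
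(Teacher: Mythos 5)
Your proof is correct and follows essentially the same route as the paper: take a non-trivial irreducible submodule (using that $\mathcal{A}$ is artinian), pass to its saturation $\pi^{-1}(\mathrm{Tor}(V/W))$ so that both the extension piece and the quotient stay free, and iterate. The only difference is presentational — you run a formal induction on rank and spell out the freeness of $W$ and the pullback step, where the paper simply says the process repeats finitely many times.
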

\begin{proof}
It is obvious if $V$ is a trivial $\mathcal{A}$-module. Next, we assume that $V$ is non-trivial as an $\mathcal{A}$-module.
Let $V_1$ be a non-trivial irreducible $\mathcal{A}$-submodule of $V$. Since $V/V_1$ is finite as a $\mathbb{C}[\partial]$-module, we assume that $\text{F}(V/V_1)$ and $\text{Tor}(V/V_1)$ are the free part and the torsion part of $V/V_1$ as $\mathbb{C}[\partial]$-modules respectively. Take $V'_1=\pi^{-1}(\text{Tor}(V/V_1))$ where $\pi$ is the natural projection from $V$ to $V/V_1$. Then $V'_1$ is an extension of $V_1$ by $\text{Tor}(V/V_1)$. Since $V$ is free, $V'_1$ is a free $\mathcal{A}$-module. Moreover, $V/V'_1\cong \text{F}(V/V_1)$ is also free. Since $V$ is finite, after repeating finite number of steps as above, we can have a free composition series of $V$.
\end{proof}

Let $\mathcal{M}$ be a free $\mathbb{C}[\partial]$-module with a $\mathbb{C}[\partial]$-basis $\{J_{(a,b)}|(a,b)\in \mathbb{C}\times \mathbb{C}\}$. Then $\mathcal{M}$ is a $Vir$-module by defining
\[ L_\lambda J_{(a,b)}=(\partial+a\lambda+b) J_{(a,b)}, \ \forall (a,b)\in \mathbb{C}\times \mathbb{C}.\]
We use $\mathcal{B}$ to denote the semi-direct product Lie conformal algebra $Vir\ltimes \mathcal{M}$.

\begin{p}{\label{p3}}Suppose that $V$ is a non-trivial finite free $\mathcal{B}$-module. Then there are only finitely many $(a,b)\in \mathbb{C}\times \mathbb{C}$ such that $J_{(a,b)}$ acts non-trivially on $V$.
\end{p}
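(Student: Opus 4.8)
The plan is to exploit that $\mathcal{M}$ is an abelian ideal of $\mathcal{B}=Vir\ltimes\mathcal{M}$, so that the whole problem is governed by the restriction of $V$ to $Vir$ together with a single ``intertwining'' relation. Fix an index $(\alpha,\beta)\in\mathbb{C}\times\mathbb{C}$ and write $J=J_{(\alpha,\beta)}$, so that $[L_\lambda J]=(\partial+\alpha\lambda+\beta)J$ in $\mathcal{B}$. Feeding this into the module axiom $a_\lambda(b_\mu m)=[a_\lambda b]_{\lambda+\mu}m+b_\mu(a_\lambda m)$ with $a=L$, $b=J$, and using conformal sesquilinearity, I would first record the identity
\[
L_\lambda(J_\mu v)=J_\mu(L_\lambda v)+\bigl((\alpha-1)\lambda+\beta-\mu\bigr)J_{\lambda+\mu}v,\qquad v\in V. \tag{$\star$}
\]
Writing $J_\mu v=\sum_{n}c_n(v)\mu^n$ with $c_n=J_{(\alpha,\beta),(n)}/n!$, I note that ``$J$ acts non-trivially'' means $c_n\neq0$ for some $n$; letting $j_0$ be the least such index, the relation $c_n(\partial v)=\partial c_n(v)+c_{n-1}(v)$ shows that the \emph{leading operator} $T:=c_{j_0}$ is $\mathbb{C}[\partial]$-linear and nonzero.

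Next I would pin down $\beta$. Extracting the $\lambda^0$-part of $(\star)$ gives $L_{(0)}(J_\mu v)=J_\mu(L_{(0)}v)+(\beta-\mu)J_\mu v$, and since $L_{(0)}$ is $\mathbb{C}[\partial]$-linear ($[L_{(0)},\partial]=0$), comparing $\mu^{j_0}$-coefficients yields
\[
[L_{(0)},T]=\beta\,T,\qquad T\in\operatorname{End}_{\mathbb{C}[\partial]}(V),\ T\neq0.
\]
Thus $\beta$ is a scalar eigenvalue of $\operatorname{ad}_{L_{(0)}}$ acting on the finite-rank $\mathbb{C}[\partial]$-module $\operatorname{End}_{\mathbb{C}[\partial]}(V)$ (equivalently, a finite-dimensional $\mathbb{C}(\partial)$-space). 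As the eigenvalues of $L_{(0)}$ over $\mathbb{C}(\partial)$ are $\partial+b_i$, where the $(a_i,b_i)$ are the parameters of the non-trivial $Vir$-composition factors $M_{a_i,b_i}$ of $V$ (finite in number, since $Vir$ is artinian by Proposition~\ref{art1} and Cheng--Kac), the eigenvalues of $\operatorname{ad}_{L_{(0)}}$ are the finitely many differences $b_i-b_j$. Hence $\beta$ ranges over a finite set.

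For fixed $\beta$, it remains to bound $\alpha$, and here I would pass to the weight picture of the (extended) annihilation Lie algebra via Proposition~\ref{p5}. The element $L_{(1)}$ is the degree operator, $L_{(m)}$ has $L_{(1)}$-weight $1-m$ (so $L_{(0)}$ raises and $L_{(\ge 2)}$ lower the weight), and the module brackets give $[L_{(1)},J_{(\alpha,\beta),(n)}]=(\alpha-1-n)J_{(\alpha,\beta),(n)}+\beta J_{(\alpha,\beta),(n+1)}$; in particular $J_{(\alpha,\beta),(n)}$ shifts $L_{(1)}$-weight by $\alpha-1-n$. Since each $M_{a_i,b_i}$ is a lowest-weight module with lowest weight $a_i$, the weights of $V$ are bounded below and lie in finitely many cosets of $\mathbb{C}/\mathbb{Z}$. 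I would then show $V$ is generated over $\mathbb{C}[\partial]$ by its lowest-weight vectors (those killed by all $L_{(m)}$, $m\ge2$), using that $L_{(2)}$ strictly lowers the weight while weights are bounded below; nontriviality of $J$ therefore produces $J_{(\alpha,\beta),(n)}v_0\neq0$ for some lowest-weight $v_0$, whose image weight $w_0+\alpha-1-n$ must again be a weight of $V$. Boundedness below together with $n\ge0$ gives a lower bound on $\alpha$ and confines it to finitely many cosets mod $\mathbb{Z}$.

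The hard part will be the \emph{upper} bound on $\alpha$ for fixed $\beta$. The leading coefficients alone are insufficient: the $\lambda^1$-relation only yields $[L_{(1)},T]=(\alpha-1-j_0)T+\beta(j_0+1)c_{j_0+1}$, and already in rank one the Euler-type operator $\operatorname{ad}_{L_{(1)}}$ admits the spurious eigenvectors $(\partial+b_i)^k$ with every eigenvalue $k\in\mathbb{Z}^+$, so the two-term analysis cannot exclude arbitrarily large $\alpha$. To close the gap I would feed in the higher $\lambda$-coefficients of $(\star)$ and reduce to the generalized $L_{(0)}$-eigenblocks $\tilde V_{[b_i]}\to\tilde V_{[b_i+\beta]}$ over $\mathbb{C}(\partial)$ preserved by $T$; on each block this becomes the rank-one functional equation
\[
f(\partial+\lambda,\mu)(\partial+a\lambda+b)=(\partial+\mu+a\lambda+b')f(\partial,\mu)+\bigl((\alpha-1)\lambda+\beta-\mu\bigr)f(\partial,\lambda+\mu),
\]
whose polynomial solvability forces a genuine algebraic constraint on $\alpha$ (in the $\mu$-free case, $\alpha=k+1$ with $k(\tfrac{k-1}{2}+a)=0$), admitting only finitely many $\alpha$ for the given weight data. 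Bounding the $\mu$-degree $j_0\le\deg_\mu J_\mu$ uniformly by conformality, and controlling the coupling term $c_{j_0+1}$ that appears when $\beta\neq0$, is the main technical obstacle; once it is handled, combining finitely many $\beta$ with finitely many $\alpha$ per $\beta$ shows that only finitely many $J_{(\alpha,\beta)}$ act non-trivially on $V$.
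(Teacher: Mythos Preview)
Your argument for bounding $\beta$ is correct and is essentially the paper's argument in operator-theoretic clothing: the paper sets $\lambda=0$ in the intertwining relation and reads off $\beta=c_i-c_j$ directly from the composition series, whereas you phrase the same conclusion as an eigenvalue condition for $\operatorname{ad}_{L_{(0)}}$ on $\operatorname{End}_{\mathbb{C}[\partial]}(V)$. Both yield $\beta\in\{c_i-c_j\}$.

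For $\alpha$, however, you take a substantially longer route than the paper, and the detour creates the very obstacles you flag at the end. The paper is direct: take a free $Vir$-composition series $0=V_0\subset\cdots\subset V_t\subset V$ with $V_i/V_{i-1}\cong M_{\Delta_i,c_i}$; show in two lines that $J_\lambda V_t\subset V_t[\lambda]$ and that $J$ trivial on $V_t$ forces $J$ trivial on $V$; then for a nonzero action write $J_\mu v_j\equiv f(\partial,\mu)v_i\bmod V_{i-1}[\mu]$ and apply $L_\lambda$ to get
\[
(-\lambda-\mu+\alpha\lambda+\beta)\,f(\partial,\lambda+\mu)=f(\partial+\lambda,\mu)(\partial+\Delta_i\lambda+c_i)-(\partial+\mu+\Delta_j\lambda+c_j)\,f(\partial,\mu).
\]
The top homogeneous part already gives $\deg_\lambda f_k=\alpha+\Delta_j-\Delta_i-1$, and the remaining coefficient comparisons are polynomial equations in $\alpha$ with coefficients depending only on the finitely many pairs $(\Delta_i,\Delta_j)$; hence finitely many $\alpha$. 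This is exactly the functional equation you reach in your final paragraph, but the paper arrives there in one step from the filtration, with no weight modules, no lowest-weight generators, and no passage to $\mathbb{C}(\partial)$.

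The specific risks along your route are real: (i) the claim that $V$ is generated over $\mathbb{C}[\partial]$ by $L_{(1)}$-lowest-weight vectors is not automatic for an iterated extension of the $M_{\Delta_i,c_i}$ (and $\partial$ does not preserve $L_{(1)}$-weight spaces), so inferring ``$J_{(n)}v_0\neq0$ for some lowest-weight $v_0$'' from nontriviality of $J$ needs an argument; (ii) reducing to $L_{(0)}$-eigenblocks over $\mathbb{C}(\partial)$ discards the polynomiality of $f(\partial,\mu)$, which is precisely what bounds its degree and hence $\alpha$; (iii) your appeal to ``conformality'' to bound $j_0$ is backwards---conformality bounds the \emph{top} $\mu$-degree of $J_\mu v$ for a fixed $v$, not the bottom, and in any case gives no uniform bound over all $(\alpha,\beta)$. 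None of these issues arises if, as the paper does, you stay over $\mathbb{C}[\partial]$, use the composition series as your ``blocks,'' and go straight to the functional equation.
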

\begin{proof}
If $Vir$ acts trivially on $V$, then for any $a,b \in \mathbb{C}$ and $v\in V$,
\[ [L_\lambda J_{(a,b)}]_{\lambda+\mu}v=(-\lambda-\mu+a\lambda+b){J_{(a,b)}}_{\lambda+\mu}v=0.\]
The degree of $\mu$ forces that ${J_{(a,b)}}_\mu v=0$. It implies that $V$ is a trivial $\mathcal{B}$-module, which is a contradiction. Thus $Vir$ acts non-trivially on $V$. Since $Vir$ is artinian, by Proposition \ref{art1}, we can assume that $V$ has the following free composition series of non-trivial index $t$ as $Vir$-submodules :
\begin{equation*}
0=V_0 \subset V_1 \subset V_2 \subset \cdots \subset V_t\subset V. \;\;\;\tag{$\ast$}
\end{equation*}
Note that $V_{i}/V_{i-1}$ $(i=1,2,\cdots, t)$ is free as a $\mathbb{C}[\partial]$-module which is an extension of a torsion $\mathbb{C}[\partial]$-module by a non-trivial irreducible $\mathcal{A}$-module.
Since any non-trivial irreducible $Vir$-module must be free of rank one, $V_{i}/V_{i-1}$ is a free of rank one as a $\mathbb{C}[\partial]$-module for each $1\leq i\leq t$. Therefore, by Proposition \ref{p1}, $V_i/V_{i-1}\cong M_{\Delta_i, c_i}$ for each $1\leq i\leq t$ and some $\Delta_i, c_i \in \mathbb{C}$. Let $\overline{v_i}$ be a $\mathbb{C}[\partial]$-basis of $V_i/V_{i-1}$ for each $i$, where
\begin{eqnarray*}
L_\lambda \overline{v_i}=(\partial+\Delta_i\lambda+c_i)\overline{v_i}, \;\; 1\leq i\leq t.
\end{eqnarray*}
Let $v_i$ be the preimage of $\overline{v_i}$. Then $\{v_1,v_2,\cdots, v_t\}$ can be seen as a $\mathbb{C}[\partial]$-basis of $V_t$.

If $J_{a,b}$ acts trivially on $V_t$, then for any $v\in V$,
\[ (-\mu-\lambda+a\lambda+b){J_{(a,b)}}_{\lambda+\mu} v=[L_\lambda {J_{(a,b)}}]_{\lambda+\mu}v= L_\lambda ({J_{(a,b)}}_\mu v)-{J_{(a,b)}}_\mu(L_\lambda v)= L_\lambda ({J_{(a,b)}}_\mu v),\]
due to that $Vir_\lambda V\subset V_t[\lambda]$. Comparing the degree of $\mu$, ${J_{(a,b)}}_\lambda v=0$.
Therefore, we assume that $J_{(a,b)}$ acts non-trivially on $V_t$.

Suppose that ${J_{(a,b)}}_\lambda V_t\not\subset V_t[\lambda]$. Let $\{\overline{u_1},\overline{u_2},\cdots,\overline{u_l}\}$ be a $\mathbb{C}[\partial]$-basis of $V/V_t$. For each $j$, let $u_j$ be the preimage of $\overline{u_j}$ in $V$.
Assumes that $k$ is the smallest integer such that \[{J_{(a,b)}}_\lambda v_k=\sum_j p_j(\partial,\lambda)u_j \;\text{mod} \; V_t[\lambda].\] Applying $L_\lambda$ on ${J_{(a,b)}}_\mu v_k$, we have
\[ (-\mu-\lambda+a\lambda+b)\sum_j p_j(\partial,\lambda+\mu)u_j=-(\partial+\mu+\Delta_k\lambda+c_k)\sum_j p_j(\partial,\mu)u_j \ \ \text{mod}\ V_t[\lambda,\mu].\]
Comparing the degree of $\partial$, we obtain a contradiction. Hence ${J_{(a,b)}}_\lambda V_t\subset V_t[\lambda]$.

For each $j$, there exists some $i$ such that \[ {J_{(a,b)}}_\mu v_j=f(\partial,\mu)v_i\ \text{mod} \ V_{i-1}[\mu], \] where $f(\partial,\mu)\neq 0$.
Applying $L_\lambda$ on ${J_{(a,b)}}_\mu v_j$ and comparing the coefficients of $v_i$, we have
\begin{equation}\label{esx1}
(-\lambda-\mu+a\lambda+b)f(\partial,\lambda+\mu)=f(\partial+\lambda,\mu)(\partial+\Delta_i\lambda+c_i)-(\partial+\mu+\Delta_j\lambda+c_j)f(\partial,\mu).
\end{equation}
Plugging $\lambda=0$ into (\ref{esx1}), one can see that $b=c_i-c_j$. Since $c_i$ and $c_j$ have only finitely many choices, the choices of $b$ are finite.
Suppose that the total degree of $f(\partial,\lambda)$ is $k$. Comparing the total degree of $k+1$ in (\ref{esx1}), we get
\begin{equation}\label{esx3}(-\lambda+a\lambda-\mu)f_k(\partial,\lambda+\mu)=f_k(\partial+\lambda,\mu)(\partial+\Delta_i\lambda)-(\partial+\mu+\Delta_j\lambda) f_k(\partial,\mu),
\end{equation}
where $f_k(\partial,\lambda)$ is the $k$-th homogeneous part of $f(\partial,\lambda)$.
Comparing the coefficients of $\partial^{k-s}\lambda\mu^{s}$ in (\ref{esx3}), where $s=deg_\lambda f_k(\partial,\lambda)$,
we obtain \begin{equation}\label{esx66}s=a+\Delta_j-\Delta_i-1.\end{equation}

Plugging (\ref{esx66}) into (\ref{esx3}), for each $d$ and $m$, the coefficients of $\partial^{k-s+d}\lambda^m\mu^{s+1-m-d}$ in (\ref{esx3}) provide a series of polynomial equations of $a$ with coefficients associated with $\Delta_j$ and $\Delta_i$. Note that the choices of $\Delta_j$ and $\Delta_i$ are finite. Since these polynomial equations are only determined by $\Delta_i$, $\Delta_j$ and (\ref{esx3}), the choices of different $a$ are also finite.

The proof is finished.
\end{proof}

When $\Delta_i\neq 0$, Equation (\ref{esx3}) has been solved in \cite[Lemma 3.6]{LY} as follows.
\begin{p}\label{key}
When $\Delta_i \neq 0$, the nonzero homogeneous solutions (up to scalar) of (\ref{esx3}) are listed as follows where $k=deg_\lambda f(\partial,\lambda)$:
\begin{enumerate}
\item $a\neq 1$.
\begin{enumerate}
\item $k=0$,\ $\Delta_j-\Delta_i=1-a$,\ $f(\partial,\lambda)=1$
\item $k=1$,\ $\Delta_j-\Delta_i=2-a$,\ $f(\partial,\lambda)=\partial-\frac{\Delta_i}{1-a}\lambda$.
\item $k=2$,\ $\Delta_j=1$, $\Delta_i=a-2$, \ $f(\partial,\lambda)=\partial^2-\frac{1+2\Delta_i}{1-a}\partial\lambda-\frac{\Delta_i}{1-a}\lambda^2$.
\item $k=3$,\ $\Delta_j=\frac{5}{3}$,\ $\Delta_i=-\frac{2}{3}$,\ $f(\partial,\lambda)=\partial^3+\frac{3}{2}\partial^2\lambda-\frac{3}{2}\partial\lambda^2-\lambda^3$.
\end{enumerate}
\item $a=1$.
\begin{enumerate}
\item $k=0$, $\Delta_j=\Delta_i$,\ $f(\partial,\lambda)=1$.
\item $k=1$, $\Delta_j-\Delta_i=1$,\ $f(\partial,\lambda)=\lambda$.
\item $k=2$, $\Delta_j-\Delta_i=2$, \ $f(\partial,\lambda)=\lambda(\partial-\Delta_i\lambda)$.
\item $k=3$, $\Delta_j=2$, $\Delta_i=-1$,\ $f(\partial,\lambda)=\lambda(\partial^2+3\partial\lambda+2\lambda^2)$.
\end{enumerate}
\end{enumerate}
\end{p}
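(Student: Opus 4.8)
The plan is to read (\ref{esx3}) as a functional equation for the single homogeneous polynomial $f=f_k$ and to play two specializations off against each other: the restriction $\mu=0$, which almost completely determines $f$, and the full equation, which constrains the weights. Rewriting (\ref{esx3}) as
\begin{equation*}
((a-1)\lambda-\mu)f(\partial,\lambda+\mu)=(\partial+\Delta_i\lambda)f(\partial+\lambda,\mu)-(\partial+\mu+\Delta_j\lambda)f(\partial,\mu),\tag{$\sharp$}
\end{equation*}
I would expand $f(\partial,\lambda)=\sum_{m=0}^{k}c_m\partial^{k-m}\lambda^{m}$ as a homogeneous polynomial of degree $k$, treating the $c_m$ together with $a,\Delta_i,\Delta_j$ as the unknowns.

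First I would set $\mu=0$ in $(\sharp)$. Since $f(\partial,0)=c_0\partial^{k}$, this collapses to
\begin{equation*}
(a-1)\lambda f(\partial,\lambda)=c_0\big[(\partial+\Delta_i\lambda)(\partial+\lambda)^{k}-(\partial+\Delta_j\lambda)\partial^{k}\big],\tag{$\flat$}
\end{equation*}
whose right-hand side vanishes at $\lambda=0$ and is therefore divisible by $\lambda$. When $a\neq1$, $(\flat)$ forces $c_0\neq0$ (otherwise $f=0$) and pins down $f$ up to the scalar $c_0$; moreover the coefficient of $\lambda^{k}$ in $f$ equals $c_0\Delta_i/(a-1)$, so the hypothesis $\Delta_i\neq0$ guarantees $\deg_\lambda f=k$, which is exactly what makes (\ref{esx66}) applicable and yields the first relation $k=a+\Delta_j-\Delta_i-1$. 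When $a=1$ the left-hand side of $(\flat)$ is zero while the bracket is nonzero for $k\geq1$ (its $\lambda^{k+1}$-coefficient is $\Delta_i\neq0$), so $c_0=0$; the degenerate case $k=0$ makes $f$ constant, and $(\sharp)$ then reduces to $\Delta_j-\Delta_i=1-a$, producing (1)(a) and (2)(a) at one stroke.

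For the branch $a=1$ with $k\geq1$ I would exploit $c_0=0$: writing $f(\partial,\lambda)=\lambda\,h(\partial,\lambda)$ with $h$ homogeneous of degree $k-1$ and cancelling the factor $\mu$ common to both sides, $(\sharp)$ becomes
\begin{equation*}
-(\lambda+\mu)h(\partial,\lambda+\mu)=(\partial+\Delta_i\lambda)h(\partial+\lambda,\mu)-(\partial+\mu+\Delta_j\lambda)h(\partial,\mu),\tag{$\natural$}
\end{equation*}
which is precisely $(\sharp)$ for the parameter value $a=0$, with the same $\Delta_i,\Delta_j$ and degree lowered by one. Hence the entire $a=1$ list is obtained from the $a\neq1$ analysis, evaluated at $a=0$, through the dictionary $f\leftrightarrow\lambda h$; in particular the degree bound for $a=1$ is inherited from the one for $a\neq1$.

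What remains is to substitute the explicit $f$ supplied by $(\flat)$ back into the full equation $(\sharp)$ and to extract the admissible weights. The difference $D$ between the two sides of $(\sharp)$ is homogeneous of degree $k+1$ and, by construction, vanishes at $\mu=0$, hence is divisible by $\mu$, so one only needs $D/\mu\equiv0$. Eliminating one parameter through $k=a+\Delta_j-\Delta_i-1$, the explicit $f$ already solves $(\sharp)$ when $k=1$ (this is (1)(b)); comparing a few further low-$\partial$-degree coefficients then forces $\Delta_j=1$ when $k=2$ (with $a$ still free) and rigidly fixes $a,\Delta_i,\Delta_j$ when $k=3$, reproducing the polynomials in (1)(c) and (1)(d). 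The hard part will be the final step: showing that for every $k\geq4$ these coefficient conditions are mutually inconsistent, so that no solution survives. I expect this to be the crux, since it is the point at which the three scalar parameters can no longer meet the growing list of coefficient equations; conceptually it is the shadow, at the level of (\ref{esx3}), of the classical fact that $Vir$-invariant differential operators between tensor-density modules exist only up to order three, the distinguished weights in (1)(d) and (2)(d) being the resonance of that top-order operator. Collecting the surviving degrees $k=0,1,2,3$ for $a\neq1$ together with their images under $f\leftrightarrow\lambda h$ for $a=1$ then gives the complete list.
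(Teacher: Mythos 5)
First, a point of comparison: the paper does not prove this proposition at all --- it is imported verbatim from \cite[Lemma 3.6]{LY} --- so you are attempting an argument the paper itself omits. Your outline contains two genuinely good ingredients: the specialization $\mu=0$ does determine $f$ up to the scalar $c_0$ when $a\neq 1$ (and, since the $\lambda^{k}$-coefficient of $f$ comes out as $c_0\Delta_i/(a-1)\neq 0$, it shows $\deg_\lambda f=k$, so that (\ref{esx66}) yields $k=a+\Delta_j-\Delta_i-1$); and the substitution $f=\lambda h$ really does carry the $a=1$ equation to the $a=0$ equation with the same $\Delta_i,\Delta_j$ and degree lowered by one. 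But as written the proposal is a plan, not a proof. The entire content of the lemma is the finiteness of the list, i.e.\ that for every $k\geq 4$ the candidate $f$ produced by $(\flat)$ fails the full equation $(\sharp)$ for every $(a,\Delta_i,\Delta_j)$ on the line $\Delta_j-\Delta_i=k+1-a$; you explicitly defer this (``I expect this to be the crux'') and supply no mechanism for it. Likewise the extra constraints $\Delta_j=1,\ \Delta_i=a-2$ at $k=2$ and the rigidity at $k=3$ are asserted, not derived. What is missing is the concrete step of substituting the $f$ determined by $(\flat)$ into $D/\mu$ and extracting the resulting overdetermined polynomial system in $(a,\Delta_i,\Delta_j)$, together with an argument (inductive in $k$, or via a few well-chosen coefficients such as those of $\partial^{k-1}\lambda^{2}\mu^{k-1}$ and $\partial^{k-2}\lambda^{3}\mu^{k-2}$) that it is inconsistent for $k\geq 4$. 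Without that, the classification is unproven.

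A second, more delicate point: your own reduction, if actually carried out, contradicts the statement as printed. For $a=1$, $k=3$, the dictionary $f=\lambda h$ sends the problem to item (1)(c) at $a=0$, which forces $\Delta_j=1$, $\Delta_i=-2$ and $h=\partial^2+3\partial\lambda+2\lambda^2$; a direct check of (\ref{esx3}) confirms this (the coefficient of $\lambda^3\mu$ reads $\Delta_i=-2$), and shows that the printed weights $\Delta_j=2$, $\Delta_i=-1$ in item (2)(d) do \emph{not} satisfy the equation. So either you must explain why your reduction preserves the weights differently than it appears to (it does not --- the same $\Delta_i,\Delta_j$ occur on both sides), or item (2)(d) contains a typo inherited from the source. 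Either way, you cannot use agreement with the printed list as a check on your computation, and a complete write-up would have to address this discrepancy explicitly.
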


\begin{p} Suppose that $V$ is a conformal module over $Lie(Vir)^+$. Then \[ W(V):=\bigoplus_{\alpha \in \mathbb{C}} V[\alpha]\] is a $Lie(Vir)^+$-submodule of $V$, where $V[\alpha]=\{v \in V| L_{(1)}\cdot v=\alpha v\}$ .
\end{p}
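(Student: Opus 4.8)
The plan is to exploit the fact that, inside $\text{Lie}(Vir)^+$ itself, the element $L_{(1)}$ acts as a semisimple grading operator. Recall from the example that $\text{Lie}(Vir)^+$ is spanned by $\{L_{(n)}\}_{n\in\mathbb{Z}^+}$ with $[L_{(m)},L_{(n)}]=(m-n)L_{(m+n-1)}$; specializing $m=1$ gives
\[ [L_{(1)},L_{(n)}]=(1-n)L_{(n)}, \]
so that $\operatorname{ad}(L_{(1)})$ is diagonal on this spanning set. Since the $L_{(n)}$ span $\text{Lie}(Vir)^+$ and $W(V)$ is by definition a union (in fact a direct sum) of $L_{(1)}$-eigenspaces, to prove that $W(V)$ is a submodule it suffices to show that every generator $L_{(n)}$ carries each homogeneous piece $V[\alpha]$ back into $W(V)$.

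First I would record that the sum $W(V)=\sum_{\alpha\in\mathbb{C}}V[\alpha]$ is automatically direct, since eigenvectors of $L_{(1)}$ attached to pairwise distinct eigenvalues are linearly independent; this justifies writing $W(V)=\bigoplus_{\alpha}V[\alpha]$. The heart of the argument is then a single eigenvalue-shift computation: for $v\in V[\alpha]$ and any $n\in\mathbb{Z}^+$, the defining axiom of a $\text{Lie}(Vir)^+$-action gives
\[ L_{(1)}\cdot(L_{(n)}\cdot v)=[L_{(1)},L_{(n)}]\cdot v+L_{(n)}\cdot(L_{(1)}\cdot v)=(1-n)L_{(n)}\cdot v+\alpha\,L_{(n)}\cdot v=(\alpha+1-n)\,L_{(n)}\cdot v. \]
Thus $L_{(n)}\cdot v$ is again an eigenvector of $L_{(1)}$, lying in $V[\alpha+1-n]\subseteq W(V)$; this is harmless even when $L_{(n)}\cdot v=0$, as the zero vector belongs to every eigenspace.

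I would then conclude directly: each generator $L_{(n)}$ maps $V[\alpha]$ into the single component $V[\alpha+1-n]$, hence $L_{(n)}\cdot W(V)\subseteq W(V)$ for all $n$; since the $L_{(n)}$ span $\text{Lie}(Vir)^+$, the entire Lie algebra preserves $W(V)$, which is therefore a $\text{Lie}(Vir)^+$-submodule. I do not expect any genuine obstacle here: the only care required is the index and sign bookkeeping in $[L_{(1)},L_{(n)}]=(1-n)L_{(n)}$ and the resulting shift $\alpha\mapsto\alpha+1-n$. It is worth remarking that the conformality hypothesis on $V$ is not actually used for this inclusion; it would become relevant only if one further wished to control $W(V)$ (for instance to compare it with $V$ or to obtain finiteness), which the present statement does not assert.
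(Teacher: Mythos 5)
Your proof is correct and is precisely the computation the paper's one-line proof (``It follows directly from $[L_{(i)},L_{(j)}]=(i-j)L_{(i+j-1)}$'') is alluding to: the specialization $[L_{(1)},L_{(n)}]=(1-n)L_{(n)}$ shows each $L_{(n)}$ maps $V[\alpha]$ into $V[\alpha+1-n]$. Your added remarks on the directness of the sum and the non-use of conformality are accurate but not needed.
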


\begin{proof}It follows directly from $[L_{(i)},L_{(j)}]=(i-j)L_{(i+j-1)}$.
\end{proof}

\begin{definition} Let $V$, $W(V)$, $V[\alpha]$ as above. Then \begin{enumerate}
\item If $V=W(V)$, then we say $V$ is a {\bf weight module} over $Lie(Vir)^+$.
\item If $V[\alpha]\neq \{0\}$, then we say $\alpha$ is a {\bf weight} of $V$, elements of $V[\alpha]$ are called {\bf weight vectors} and the set of all weights of $V$ is denoted by $\omega(V)$.
\item A weight module $V$ is called {\bf uniformly bounded} if there exists $K \in \mathbb{Z}^+$, for any $\alpha \in \mathbb{C}$, $dim V[\alpha] \leq K$.
\item A {\bf lowest weight module} over $Lie(Vir)^+$ is a weight module and $\omega(V)$ has a lower bound.
\end{enumerate}
\end{definition}

Suppose that $V$ is a completely non-trivial $Vir$-module. By Proposition {\ref{p5}}, there exists a natural $\text{Lie}(Vir)^+$-module structure on $V$.
\begin{p} \label{pweight}Suppose that $V$ is a completely non-trivial $Vir$-module. Then $W(V)$ is a uniformly bounded lowest weight module over $Lie(Vir)^+$. More explicitly, for any $\alpha \in \mathbb{C}$,
$dim V[\alpha] \leq rank(V)$.
\end{p}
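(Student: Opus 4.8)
The plan is to use the free composition series supplied by complete non-triviality and to push all computations down to the rank-one layers, where the action of $L_{(1)}$ can be written out explicitly. Since $V$ is completely non-trivial, it admits a free composition series $0=V_0\subset V_1\subset\cdots\subset V_t=V$ whose successive quotients $Q_i:=V_i/V_{i-1}$ are free over $\mathbb{C}[\partial]$ and are extensions of a torsion module by a non-trivial irreducible $Vir$-module. By Proposition~\ref{p1} that irreducible piece is a free rank-one module $M_{a_i,b_i}$ with $a_i\neq 0$; as a torsion quotient adds nothing to the rank, each $Q_i$ is itself free of rank one, and comparing ranks (using $V\cong\bigoplus_i Q_i$ as $\mathbb{C}[\partial]$-modules) gives $t=rank(V)=:r$. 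Writing $Q_i=\mathbb{C}[\partial]w_i$ with $L_\lambda w_i=P_i(\partial,\lambda)w_i$, substituting $\mu=0$ into the module identity $P(\partial+\lambda,\mu)P(\partial,\lambda)=(\lambda-\mu)P(\partial,\lambda+\mu)+P(\partial+\mu,\lambda)P(\partial,\mu)$ forces $P_i(\partial+\lambda,0)=P_i(\partial,0)+\lambda$, hence $P_i(\partial,0)=\partial+\tilde b_i$; together with the classification of free rank-one $Vir$-modules this identifies $Q_i\cong M_{\tilde a_i,\tilde b_i}$ for suitable constants $\tilde a_i,\tilde b_i$.

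The key local computation is the action of $L_{(1)}$ on the $\mathbb{C}$-basis $\{\partial^k w_i\}_{k\ge 0}$ of $Q_i$. From $L_\lambda(\partial^k w_i)=(\partial+\lambda)^kP_i(\partial,\lambda)w_i$, differentiating in $\lambda$ at $\lambda=0$ yields $L_{(1)}(\partial^k w_i)=(\tilde a_i+k)\partial^k w_i+k\tilde b_i\,\partial^{k-1}w_i$. Thus in this basis $L_{(1)}$ is triangular with diagonal entries $\tilde a_i,\tilde a_i+1,\tilde a_i+2,\dots$, which are pairwise distinct; consequently on $Q_i$ the operator $L_{(1)}$ has simple spectrum $\tilde a_i+\mathbb{Z}^+$, every eigenvalue occurs with multiplicity exactly one, and $\mathrm{Re}$ of every eigenvalue is at least $\mathrm{Re}(\tilde a_i)$. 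In particular $\dim Q_i[\alpha]\le 1$ for all $\alpha\in\mathbb{C}$.

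Next I would transfer these facts to $V$. Because $L_{(1)}$ preserves every $V_i$, each short exact sequence $0\to V_{i-1}\to V_i\to Q_i\to 0$ gives, eigenspace by eigenspace, the subadditivity $\dim\ker(L_{(1)}-\alpha)|_{V_i}\le\dim\ker(L_{(1)}-\alpha)|_{V_{i-1}}+\dim Q_i[\alpha]$; iterating over the filtration yields $\dim V[\alpha]\le\sum_{i=1}^r\dim Q_i[\alpha]\le r=rank(V)$, the asserted uniform bound. For the lowest-weight property, if $L_{(1)}v=\alpha v$ with $v\neq 0$ and $k$ is minimal with $v\in V_k$, then the image of $v$ in $Q_k$ is a nonzero $\alpha$-eigenvector, so $\alpha$ is a weight of some $Q_i$; hence $\omega(V)\subseteq\bigcup_{i=1}^r(\tilde a_i+\mathbb{Z}^+)$, a finite union of sets bounded below, and therefore $\omega(V)$ has a lower bound. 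Combining these, $W(V)=\bigoplus_\alpha V[\alpha]$ is a uniformly bounded lowest weight $\text{Lie}(Vir)^+$-module with $\dim V[\alpha]\le rank(V)$.

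The hard part is the second step, namely pinning down the $L_{(1)}$-spectrum of a single rank-one layer precisely enough to get multiplicity one. The subtlety is that a layer $Q_i$ need not be irreducible, since its torsion quotient may be nonzero, so I must check that even such a reducible rank-one module is of type $M_{\tilde a_i,\tilde b_i}$ and still carries a multiplicity-free $L_{(1)}$-action; the identity $P_i(\partial,0)=\partial+\tilde b_i$ is exactly what keeps the diagonal entries $\tilde a_i+k$ distinct and bounds the spectrum from below. Everything after that is the routine subadditivity-along-a-filtration argument.
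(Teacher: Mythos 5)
Your proof is correct and follows essentially the same route as the paper: the paper argues by induction on $rank(V)$, peeling off a free rank-one non-trivial submodule $V_0$, computing the $L_{(1)}$-eigenvectors on a rank-one module explicitly (obtaining $f(\partial)=k(\partial+\beta)^{\gamma-\alpha}$, hence multiplicity one and weights in $\alpha+\mathbb{Z}^+$), and using the same subadditivity $\dim V[\alpha]\leq\dim V_0[\alpha]+\dim(V/V_0)[\alpha]$. Organizing this via the full free composition series rather than induction on rank is only a cosmetic difference.
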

\begin{proof} Let us take induction on the rank of $V$.
If $rankV=1$, then $V=\mathbb{C}[\partial]v$ and $L_\lambda v=(\partial+\alpha\lambda+\beta)v$ for some $\alpha$, $\beta \in \mathbb{C}$. For any $f(\partial)\in \mathbb{C}[\partial]$, by Proposition {\ref{sxpp}},
\[ {L_{(1)}}\cdot (f(\partial)v)=\frac{d}{d\lambda}L_{\lambda} (f(\partial)v)|_{\lambda=0}. \]
If $f(\partial)v$ is a weight vector of $L_{(1)}$ with weight $\gamma$, then we have the following equation of $f(\partial)$,
\begin{equation}{\label{e6}}
\frac{d}{d\lambda}L_{\lambda} (f(\partial)v)|_{\lambda=0}=\gamma f(\partial)v.
\end{equation}
Comparing the degrees of $\partial$, one can see that the non-zero solution of (\ref{e6}) occurs only if $\gamma-\alpha \in \mathbb{Z}^+$ and $f(\partial)=k(\partial+\beta)^{\gamma-\alpha}$ for some $k \in \mathbb{C}$. As a consequence, $\omega(V) \subset \alpha+\mathbb{Z}^+$ and $dim(V[\alpha])\leq 1$. Suppose $rank(V)=n$. Let $V_0$ be a free rank one non-trivial $Vir$-submodule of $V$. For any $\alpha \in \mathbb{C}$, since $V_0[\alpha]=V[\alpha]\cap V_0$, $dim V[\alpha]\leq dim V_0[\alpha]+dim (V/V_0)[\alpha]$. Since $rank(V/V_0)=n-1$, by induction, $dim (V/V_0)[\alpha]\leq n-1$ and $\omega(V/V_0)$ has a lower bound. Hence we completes the proof.
\end{proof}

Next, we shall focus on finite modules of two Lie conformal algebras, namely, map Virasoro Lie conformal algebras and $Vir \ltimes_a \text{Cur}\mathfrak{g}$ ,which is defined in Example \ref{sxe1} for some Lie algebra $\mathfrak{g}$ and $a\in \mathbb{C}$.

By \cite[Theorem 3.1]{CK} and \cite[Theorem 14.1]{BDK}, we have the following lemma.
\begin{lemma} $Vir \ltimes_a \text{Cur}\mathfrak{g}$ is artinian. Moreover, any non-trivial finite irreducible $Vir \ltimes_a \text{Cur}\mathfrak{g}$-module must be of the form $M_{U,\Delta,c}=\mathbb{C}[\partial]\otimes U$, where $U$ is an irreducible $\mathfrak{g}$-module and $\Delta$, $c \in \mathbb{C}$, with the following $\lambda$-actions: for any $u \in U$, $x \in \mathfrak{g}$,
\[ L_\lambda u=(\partial+\Delta\lambda+c)u,\ \ x_\lambda u=x\cdot u. \]
\end{lemma}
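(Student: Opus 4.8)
The plan is to prove both assertions by passing to the extended annihilation Lie algebra via Proposition \ref{p5} and running a lowest-weight argument, reducing the classification to the cited results. Writing $\mathcal{A}=Vir\ltimes_a\text{Cur}\mathfrak{g}$, its annihilation part is $\text{Lie}(Vir)^+\ltimes(\mathfrak{g}\otimes\mathbb{C}[t])$, where $[x_{(m)},y_{(n)}]=[x,y]_{(m+n)}$ and, from $[L_\lambda x]=(\partial+a\lambda)x$, one computes $[L_{(m)},x_{(n)}]=((a-1)m-n)x_{(m+n-1)}$. So $V$ is a finite irreducible $\mathcal{A}$-module if and only if it is a finite irreducible conformal module over this Lie algebra. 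First I would observe that $Vir$ cannot act trivially on a non-trivial $V$: if it did, then applying the module axiom to $[L_\lambda x]_{\lambda+\mu}v=L_\lambda(x_\mu v)-x_\mu(L_\lambda v)$ would give $(a\lambda-\lambda-\mu)x_{\lambda+\mu}v=0$, and comparing degrees in $\mu$ (as in the proof of Proposition \ref{p3}) forces $x_\lambda v=0$, contradicting non-triviality. Hence $V$ is a non-trivial $Vir$-module, and Proposition \ref{pweight} applies (after checking $V=W(V)$, which holds because $W(V)$ is preserved by every $x_{(n)}$ and is therefore a nonzero $\mathcal{A}$-submodule, equal to $V$ by irreducibility): $V$ is a uniformly bounded lowest weight module with $\omega(V)\subseteq\Delta+\mathbb{Z}^+$ for some lowest weight $\Delta$, each weight space being finite-dimensional.

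Next I would analyze the lowest weight space $V[\Delta]$. From the relation above, $x_{(n)}$ raises the $L_{(1)}$-weight by $a-1-n$. When $a=1$ (the non-abelian case), $x_{(n)}$ with $n\geq 1$ strictly lowers the weight and so annihilates $V[\Delta]$ by minimality, while $x_{(0)}$ preserves $V[\Delta]$; together with $L_{(n)}V[\Delta]=0$ for $n\geq 2$, this makes $V[\Delta]$ a finite-dimensional module over $\mathfrak{g}_{(0)}\cong\mathfrak{g}$, necessarily irreducible (call it $U$) by irreducibility of $V$. One then reconstructs $V\cong\mathbb{C}[\partial]\otimes U$ with $L_\lambda u=(\partial+\Delta\lambda+c)u$ and $x_\lambda u=x\cdot u$. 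When $a\neq 1$ (forcing $\mathfrak{g}$ abelian), $x_{(0)}$ shifts the weight by $a-1\neq 0$; since $\omega(V)$ is bounded below and contained in $\Delta+\mathbb{Z}$, a short induction on the weight shows $x_{(0)}$, and hence every $x_{(n)}$, acts as zero, so $\text{Cur}\mathfrak{g}$ acts trivially and $V$ reduces to the $Vir$-module $M_{\Delta,c}$ of Proposition \ref{p1} with $\Delta\neq 0$; this is the stated tensor module with $U$ the one-dimensional trivial $\mathfrak{g}$-module. The same weight machinery proves the artinian property: for any non-trivial finite free $V$, choose an irreducible $\mathfrak{g}$-submodule of its lowest weight space (when $a=1$) and generate, or invoke artinianness of $Vir$ (when $a\neq 1$), to produce a non-trivial irreducible submodule.

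I would emphasize that this scheme is precisely the combination of \cite[Theorem 3.1]{CK}, which governs the $Vir$ and $a=1$ current pieces, and \cite[Theorem 14.1]{BDK}, which governs the current (pseudo)algebra structure. Thus the most economical route is to quote those two classifications directly and supply only the two bridging observations above: that $Vir$ necessarily acts non-trivially, and that the current part acts trivially exactly when $a\neq 1$.

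The main obstacle I anticipate is the reconstruction step in the $a=1$ case: upgrading the finite-dimensional irreducible $\mathfrak{g}$-module $U=V[\Delta]$ to the full conformal module and proving that $V$ is free of the form $\mathbb{C}[\partial]\otimes U$ with no higher $\lambda$-coefficients. This requires showing that $V[\Delta]$ generates $V$ over the annihilation algebra, that its elements are killed by $L_{(n)}$ for $n\geq 2$ and $x_{(n)}$ for $n\geq 1$ (a weight-and-boundedness argument combined with conformality), and that the resulting $\mathbb{C}[\partial]$-module is free of rank $\dim U$; one then checks the module axioms for the reconstructed action and confirms irreducibility. The uniform boundedness furnished by Proposition \ref{pweight} is exactly what keeps $\dim U$ finite and makes this reconstruction feasible.
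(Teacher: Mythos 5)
The paper offers no argument for this lemma at all: it is stated as a direct consequence of \cite[Theorem 3.1]{CK} and \cite[Theorem 14.1]{BDK}, so your closing recommendation to ``quote those two classifications directly'' is exactly what the authors do, and your preliminary observations (that $Vir$ must act non-trivially, that one passes to $\text{Lie}(\mathcal{A})^e$ and runs a lowest-weight argument) are the standard machinery behind those citations. To that extent your route and the paper's coincide, and your sketch of the $a=1$ case (the ideal $\mathfrak{g}\otimes t\mathbb{C}[t]$ kills the lowest weight space, which becomes an irreducible $\mathfrak{g}$-module $U$, and $V$ is reconstructed as $\mathbb{C}[\partial]\otimes U$) is the right skeleton, with the reconstruction step correctly identified as the real work.

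There is, however, a concrete gap in your treatment of the case $a\neq 1$. You claim that because $x_{(0)}$ shifts the $L_{(1)}$-weight by $a-1\neq 0$ and $\omega(V)$ is bounded below, ``a short induction on the weight shows $x_{(0)}$ acts as zero.'' This fails whenever $a-1$ is a positive integer: for $a=2$ the operator $x_{(0)}$ raises the weight by $1$, which keeps you inside $\Delta+\mathbb{Z}^+$ forever, and neither the lower bound on $\omega(V)$ nor the uniform bound on $\dim V[\alpha]$ forces $x_{(0)}=0$. Indeed, on a rank-one module $\mathbb{C}[\partial]v$ with $L_\lambda v=(\partial+\Delta\lambda+c)v$, the compatibility equation coming from $[L_\lambda x]=(\partial+2\lambda)x$ admits the genuinely nonzero solution $x_\lambda v=\gamma(\partial+c+\Delta\lambda)v$ (this is the $k=1$ solution of Proposition \ref{key} with $\Delta_i=\Delta_j$), so weight considerations alone cannot eliminate the current action. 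What actually kills it is the relation $[x_\lambda y]=0$: commutativity forces $x_\lambda(y_\mu v)=y_\mu(x_\lambda v)$, and the candidate solutions above violate this unless $\gamma=0$. Your argument needs this extra input (or an appeal to the known classification of rank-two modules) to be complete. A smaller but similar issue affects your artinian claim for $a\neq 1$: invoking the artinianness of $Vir$ produces an irreducible $Vir$-submodule, which is not automatically a $Vir\ltimes_a\text{Cur}\mathfrak{g}$-submodule; one must still check stability under the current part, as is done for instance in the proof of Lemma \ref{4343}.
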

\begin{lemma}\label{sxl1} Suppose $V$ is a finite free $Vir \ltimes_a \text{Cur}\mathfrak{g}$-module. Let $M_{U,\Delta,c}$ be an irreducible submodule of $V$ such that $V/M_{U,\Delta,c}$ is torsion. If $V/M_{U,\Delta,c}$ is not zero, then $V\cong M_{\mathbb{C},0,c}$.\end{lemma}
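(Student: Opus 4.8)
The plan is to extract all the rigidity from conformal sesquilinearity applied to one carefully chosen lift of a torsion vector, using repeatedly that the free module $V$ is torsion-free. First I would note that $T:=V/M_{U,\Delta,c}$ is a finitely generated torsion $\mathbb{C}[\partial]$-module, hence finite-dimensional over $\mathbb{C}$ and equal to its own torsion, so $T$ is a trivial module by \cite[Lemma 8.2]{DK}. Consequently $L_\lambda V\subseteq M_{U,\Delta,c}[\lambda]$ and $x_\lambda V\subseteq M_{U,\Delta,c}[\lambda]$ for all $x\in\mathfrak{g}$. Because $T\neq 0$, the operator $\partial$ has an eigenvector $\bar v\in T$, say $(\partial-\mu)\bar v=0$; lifting it to $v\in V$ gives $w:=(\partial-\mu)v\in M_{U,\Delta,c}$ with $w\neq 0$, since $V$ is torsion-free while $v\notin M_{U,\Delta,c}$.

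Writing $M_{U,\Delta,c}=\bigoplus_i\mathbb{C}[\partial]u_i$ for a basis $\{u_i\}$ of $U$, and $L_\lambda v=\sum_i g_i(\partial,\lambda)u_i$, $w=\sum_i w_i(\partial)u_i$, the sesquilinearity axiom $L_\lambda(\partial v)=(\partial+\lambda)L_\lambda v$ combined with $\partial v=\mu v+w$ yields componentwise $(\partial+\lambda-\mu)g_i(\partial,\lambda)=w_i(\partial+\lambda)(\partial+\Delta\lambda+c)$. Since the prime $\partial+\lambda-\mu$ must divide the right-hand side, and a one-line check shows it divides $\partial+\Delta\lambda+c$ precisely when $\Delta=1$ and $\mu=-c$, I would argue: if $(\Delta,\mu)\neq(1,-c)$ then $w_i(\mu)=0$ for every $i$, so $(\partial-\mu)\mid w$ in $M_{U,\Delta,c}$ and torsion-freeness forces $v\in M_{U,\Delta,c}$, a contradiction. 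Hence $\Delta=1$ and $\mu=-c$. Running the same computation for the currents, $(\partial+\lambda+c)\,x_\lambda v=x_\lambda w=\sum_i w_i(\partial+\lambda)(x\cdot u_i)$; demanding that $x_\lambda v$ be polynomial and substituting $\partial=-\lambda-c$ gives $x\cdot w(-c)=0$ for all $x\in\mathfrak{g}$, where $w(-c):=\sum_i w_i(-c)u_i$. A further torsion-freeness argument shows $w(-c)\neq 0$, so it spans a trivial $\mathfrak{g}$-submodule of the irreducible $U$, and irreducibility forces $\dim U=1$ with $\mathfrak{g}\cdot U=0$.

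It remains to identify $V$. We now have $M_{U,\Delta,c}\cong M_{\mathbb{C},1,c}$ and $V$ a non-trivial free $Vir$-module of rank $\dim U=1$, so by Proposition \ref{p1} $V\cong M_{\Delta_V,c_V}=\mathbb{C}[\partial]v_0$ with $L_\lambda v_0=(\partial+\Delta_V\lambda+c_V)v_0$. Writing the submodule generator as $u=p(\partial)v_0$ and matching the intertwining identity $p(\partial+\lambda)(\partial+\Delta_V\lambda+c_V)=(\partial+\lambda+c)p(\partial)$ forces $c_V=c$ and $p(\partial)=(\partial+c)^{1-\Delta_V}$; properness $M_{U,\Delta,c}\subsetneq V$ gives $\deg p\geq 1$, and verifying the full identity (rather than only its linear-in-$\lambda$ part) excludes $\Delta_V\leq -1$, leaving $\Delta_V=0$ and $u=(\partial+c)v_0$. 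Finally, applying sesquilinearity to $0=x_\lambda u=(\partial+\lambda+c)\,x_\lambda v_0$ and torsion-freeness shows $\mathfrak{g}$ acts trivially on $V$, whence $V\cong M_{\mathbb{C},0,c}$.

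The hard part will be the divisibility bookkeeping, namely arranging that torsion-freeness of $V$ is invoked at exactly the right places to rule out $v\in M_{U,\Delta,c}$, together with the very last identification: a naive comparison of only the linear-in-$\lambda$ terms would wrongly admit all the spurious generators $p=(\partial+c)^{1-\Delta_V}$ with $\Delta_V<0$, and genuinely excluding them requires the full polynomial identity.
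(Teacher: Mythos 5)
Your proof is correct, but it reaches the key conclusion --- that $U$ must be the trivial one-dimensional $\mathfrak{g}$-module --- by a genuinely different mechanism than the paper. The paper's proof takes any $m\in V$ with $p(\partial)m=\sum_i f_i(\partial)u_i$ and applies a central element $C_U$ of the enveloping algebra $U(\mathfrak{g})$ acting by a nonzero scalar on a non-trivial irreducible $U$ (Kirillov, Prop.\ 6.55); since $C_U$ commutes with $\partial$ and maps $V$ into $M_{U,\Delta,c}$, torsion-freeness forces $m\in M_{U,\Delta,c}$, so a non-trivial $U$ admits no proper torsion quotient at all. That argument is short but requires first reducing to a finite-dimensional (effectively reductive) image of $\mathfrak{g}$. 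You instead pick a $\partial$-eigenvector in the torsion quotient and extract everything from sesquilinearity and divisibility in $\mathbb{C}[\partial,\lambda]$: the irreducible factor $\partial+\lambda-\mu$ forces $\Delta=1$ and $\mu=-c$, and then produces the nonzero $\mathfrak{g}$-invariant vector $w(-c)\in U$, which kills $U$ by irreducibility. Your route is more elementary and self-contained (no enveloping-algebra input, no case distinction on $\dim\mathfrak{g}$), and it records the extra data $\Delta=1$, $\mu=-c$ explicitly; the paper's is shorter once the Casimir is granted. Both proofs finish identically by pinning down the rank-one $Vir$-module containing $M_{\mathbb{C},1,c}$. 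The one spot to tighten is your final intertwining computation: after setting $\lambda=0$ to get $c_V=c$, the shift $\partial\mapsto\partial-c$ reduces the identity to $q(\partial+\lambda)(\partial+\Delta_V\lambda)=(\partial+\lambda)q(\partial)$, and coprimality of $\partial$ and $\partial+\lambda$ gives at once $q=a\partial$, $\Delta_V=0$; this is cleaner than parametrizing the candidates by $(\partial+c)^{1-\Delta_V}$ and then excluding $\Delta_V\le -1$, although your version, with the full-identity check you yourself flag, is also valid.
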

\begin{proof}If an element $x$ of $\mathfrak{g}$ acts trivially on $U$, then the $\lambda$-brackets $[L_\lambda x]=(\partial+a\lambda)x$ implies that $x$ acts trivially on $V$. Hence we may assume that $\mathfrak{g}$ is finite dimensional. For any $v\in V$ and $x\in \mathfrak{g}$, $x_\lambda v \in M_{U,\Delta,c}[\lambda]$. For any $m\in V$, there exists some $p(\partial)\in \mathbb{C}[\partial]$ such that
\begin{equation}{\label{bc12}}
p(\partial)m=f_1(\partial)u_1+f_2(\partial)u_2+\cdots+f_n(\partial)u_n
\end{equation}
where $\{u_1,u_2,\cdots,u_n\}$ is a basis of $U$. By Proposition 6.55 of \cite{Ki}, if $U$ is non-trivial, there exists a central
element $C_U\in U(\mathfrak{g})$ which acts by a non-zero constant in $U$. Applying $C_U$ on both sides of Equation (\ref{bc12}), we can see that each $f_i(\partial)$ is divisible by $p(\partial)$. It implies that $m \in M_{U,\Delta,c}$. Hence if $V/M_{U,\Delta,c}$ is not zero, $U$ is a trivial $\mathfrak{g}$-module of dimensional one. Hence $V$ can be regarded as a free of rank one $Vir$-module. If $V$ has a non-zero proper $Vir$-submodule, then $V\cong M_{\mathbb{C},0,c}$.
\end{proof}

\begin{p}\label{psx6}If a Lie algebra $\mathfrak{g}$ is infinite dimensional, $Vir \ltimes_a \text{Cur}\mathfrak{g}$ does not have faithful finite modules for any $a\in \mathbb{C}$.
\end{p}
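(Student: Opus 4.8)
The plan is to rule out a finite faithful module by extracting, from any finite module $V$ on which the algebra acts, a nonzero element of $\mathfrak{g}$ annihilating $V$. Since a nonzero $x\in\mathfrak{g}$ is a nonzero element of $Vir\ltimes_a\text{Cur}\mathfrak{g}$, such an $x$ lies in $\text{Ann}(V)$ and contradicts faithfulness. Thus the whole difficulty is concentrated in one finiteness statement: the $\mathbb{C}$-linear map $\rho\colon\mathfrak{g}\to\{\text{conformal operators on }V\}$ sending $x$ to $X:=\bigl(v\mapsto x_\lambda v\bigr)$ has finite-dimensional image. Granting this, $\ker\rho$ has finite codimension in $\mathfrak{g}$; as $\mathfrak{g}$ is infinite dimensional, $\ker\rho\neq 0$, and any $0\neq x\in\ker\rho$ satisfies $x_\lambda V=0$, the desired contradiction. (If $Vir$ acts trivially on $V$, one sees as at the start of the proof of Proposition \ref{p3} that $V$ is trivial, so $L$ itself annihilates $V$ and faithfulness already fails; hence I may assume $Vir$ acts nontrivially.)

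To bound $\mathrm{im}\,\rho$, note that $[L_\lambda x]=(\partial+a\lambda)x$ together with the module axioms forces each $X=\rho(x)$ to satisfy
\begin{equation*}
L_\lambda(X_\mu v)-X_\mu(L_\lambda v)=(a\lambda-\lambda-\mu)X_{\lambda+\mu}v,\qquad v\in V.\tag{$\star$}
\end{equation*}
Let $\mathcal{E}(V)$ be the space of conformal operators on $V$ that annihilate $\text{Tor}(V)$ and satisfy $(\star)$. Because $\text{Tor}(V)$ is a trivial submodule we have $\mathrm{im}\,\rho\subseteq\mathcal{E}(V)$, so it suffices to prove $\dim_{\mathbb{C}}\mathcal{E}(V)<\infty$. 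Every $X\in\mathcal{E}(V)$ kills $\text{Tor}(V)$ and is $\mathbb{C}[\partial]$-compatible, hence factors through the free module $\overline{V}=V/\text{Tor}(V)$ as a map $\overline{V}\to V[\mu]$. Projecting the target to $\overline{V}$ gives $\overline{X}\in\mathcal{E}(\overline{V})$, and the kernel of $X\mapsto\overline{X}$ consists of conformal maps $\overline{V}\to\text{Tor}(V)$ obeying $(\star)$; I will bound each piece.

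For $\overline{X}$ I would rerun the argument of Proposition \ref{p3} verbatim with the label now fixed to $b=0$. Since $Vir$ is artinian, fix a free composition series $0=\overline{V}_0\subset\cdots\subset\overline{V}_t\subset\overline{V}$ with $\overline{V}/\overline{V}_t$ trivial and each $\overline{V}_i/\overline{V}_{i-1}\cong M_{\Delta_i,c_i}$ of rank one, with basis vectors $\overline{v_i}$. The degree computations in Proposition \ref{p3} show that any operator satisfying $(\star)$ is trivial on $\overline{V}$ once it is trivial on $\overline{V}_t$ and that it preserves the filtration, so $\overline{X}$ is described by finitely many matrix entries $f_{ij}(\partial,\mu)$, each solving Equation (\ref{esx1}) with $b=0$; by the reduction to (\ref{esx3}) and Proposition \ref{key} the solution space of this equation is finite dimensional, so $\mathcal{E}(\overline{V})$ is finite dimensional. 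For the torsion-valued part, plugging $L_\lambda\overline{v_j}=(\partial+\Delta_j\lambda+c_j)\overline{v_j}$ into $(\star)$ and comparing $\lambda$-degrees (using that $\text{Tor}(V)$ carries a finite-dimensional $\partial$-action) shows that such a component has bounded $\mu$-degree and that its values on the generators lie in a fixed finite-dimensional space, again by induction along the filtration. Combining the two bounds yields $\dim_{\mathbb{C}}\mathcal{E}(V)<\infty$.

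The main obstacle I anticipate is exactly this torsion bookkeeping: unlike Proposition \ref{p3}, where $V$ is free, here $V$ is only finite, so I must check carefully that conformal maps into $\text{Tor}(V)$ contribute only finitely many dimensions and do not spoil the count. The conceptual reason the scheme succeeds—and the point that differs from the $\mathcal{B}$-analysis—is that in $Vir\ltimes_a\text{Cur}\mathfrak{g}$ every element of $\mathfrak{g}$ carries the \emph{same} $Vir$-weight data $(a,0)$, so one cannot argue, as in Proposition \ref{p3}, that only finitely many labels occur; instead one must bound the \emph{dimension} of the space of admissible actions, which is precisely what $(\star)$ and Proposition \ref{key} provide.
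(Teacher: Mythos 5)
Your strategy is, at its core, the same mechanism the paper uses: everything reduces to the fact that the $L$-compatibility equation $(\star)$ (the paper's (\ref{bcsx}), i.e.\ (\ref{esx1}) with $b=0$) has a finite-dimensional solution space, so the linear map from $\mathfrak{g}$ into the space of admissible actions has cofinite-dimensional, hence nonzero, kernel. The difference is in the bookkeeping. The paper first identifies the composition factors as $M_{U_i,\Delta_i,c_i}$ via Lemma \ref{sxl1} and then inducts on the non-trivial index: intersecting two cofinite-dimensional ideals that kill $V_{t-1}$ and $V/V_1$ leaves only elements of $\mathfrak{g}$ mapping the top factor into the bottom one, so a \emph{single} instance of the functional equation is needed at each stage. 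You instead bound in one stroke the dimension of the whole space $\mathcal{E}(V)$ of conformal operators satisfying $(\star)$; this is more uniform and avoids knowing the shape of the composition factors, but it concentrates the difficulty in the claim that $\mathcal{E}(\overline{V})$ is finite dimensional, and there your sketch is too quick. An operator satisfying $(\star)$ need not preserve the filtration (Proposition \ref{p3} only shows it preserves $V_t$), and the matrix entries $f_{ij}(\partial,\mu)$ do \emph{not} each satisfy the homogeneous equation (\ref{esx1}): since $L_\lambda v_i=(\partial+\Delta_i\lambda+c_i)v_i$ only modulo $V_{i-1}[\lambda]$, only the leading (associated-graded) entry for each $j$ does, and the remaining entries satisfy inhomogeneous versions whose right-hand sides involve the higher entries. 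The conclusion survives — the system is triangular, the diagonal blocks have finite-dimensional kernels by Proposition \ref{key}, and solution sets of the inhomogeneous equations are cosets of those kernels — but this filtration argument must be made explicit; it is precisely the complication the paper's induction on $n(V)$ is designed to sidestep. Your treatment of the torsion-valued component is no less detailed than the paper's (which simply reduces to the free, completely non-trivial case "as in the proof of Proposition \ref{p3}"), and the one genuine corner case — Proposition \ref{key} only covers $\Delta_i\neq 0$, while factors with $\Delta_i=0$ such as $M_{\mathbb{C},0,c}$ can occur — is left unaddressed by the paper as well, so it is not a defect of your proposal relative to the published argument.
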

\begin{proof}
Suppose that $V$ is a finite non-trivial free $Vir \ltimes_a \text{Cur}\mathfrak{g}$-module. Then $V$ has a free composition series, since $Vir \ltimes_a \text{Cur}\mathfrak{g}$ is artinian.

As in the proof of Proposition \ref{p3}, it is suffice for us to consider that $V$ is completely non-trivial as a $Vir \ltimes_a \text{Cur}\mathfrak{g}$-module. We prove it by induction on the non-trivial index $n(V)$. If $n(V)=1$, then by Lemma \ref{sxl1}, $V\cong M_{U_1,\Delta_1,c_1}$, where $U_1$ is some irreducible $\mathfrak{g}$-module, $\Delta_1$, $c_1 \in \mathbb{C}$. Hence there exists a cofinite dimensional ideal $\mathcal{J}$ of $\mathfrak{g}$ acting trivially on $V$, according to that $\mathfrak{g}$ is infinite dimensional. Let $n(V)=t$. Suppose that $V$ has the following free composition series:
\begin{equation}
0= V_0 \subset V_1 \subset \cdots \subset V_t=V.
\end{equation}
where each $V_i/V_{i-1}\cong M_{U_i,\Delta_i,c_i}$ by Lemma \ref{sxl1}.
Since $n(V_{t-1})=n(V/V_1)=t-1$, by induction, there exists cofinite dimensional ideals $I_1$ and $I_2$ of $\mathfrak{g}$ acting trivially on $V/V_1$ and $V_{t-1}$ respectively. Since $I_1 \cap I_2 $ is also cofinite dimensional, for any non-zero element $x$ of $I_1 \cap I_2 $ , $u_t \in U_t$,
$v\in V_{t-1}$,
\[ L_\lambda u_t=(\partial+\Delta\lambda+c)u_t \ \text{mod}\;\; V_{t-1}[\lambda],\]
\[ x_\lambda v=0,\ x_\lambda u_t=\sum_i q_{x,i}(\partial,\lambda)v_{i,1}, \ \text{for some}\ q_{x,i}(\partial,\lambda)\in \mathbb{C}[\partial,\lambda],\]
where $\{v_{i,1}\}$ is a basis of $U_1$.
Plugging these into Jacobi-identity \[ [L_\lambda x]_{\lambda+\mu} u_t=L_\lambda x_\mu u_t-x_\mu L_\lambda u_t, \] and comparing the coefficients of each $v_{i,0}$, we have
\begin{equation}{\label{bcsx}} (-\lambda-\mu+a\lambda)q_{x,i}(\partial,\lambda+\mu)=q_{x,i}(\partial+\mu,\lambda)(\partial+\Delta_0\lambda+c_0)- (\partial+\mu+\Delta_t\lambda+c_t)q_{x,i}(\partial,\mu).
\end{equation}
By the proof of Proposition \ref{p3}, $deg_\lambda q_{x,i}(\partial,\lambda)=a+\Delta_0-\Delta_t-1$ and the set of solutions of (\ref{bcsx}), denoted by $S$ is a finite dimensional vector space over $\mathbb{C}$. It means that any $x \in I_1 \cap I_2 $ can be regarded as a linear map from $U_t$ to $S\otimes U_0$. Thus there
exists a cofinite dimensional ideal $\mathcal{J}$ of $I_1 \cap I_2 $ acting trivially on $V$.
\end{proof}

\begin{definition}Suppose that $\mathbb{A}$ is a commutative associative algebra with a unit $\epsilon$. Then the {\bf map Virasoro Lie conformal algebra associated to $\mathbb{A}$} is $\mathcal{V}(\mathbb{A}):=Vir \otimes \mathbb{A}$ with the $\mathbb{C}[\partial]$-module actions $\partial(L\otimes x)=(\partial L)\otimes x$ and the $\lambda$-brackets given by \[ [{L_x}_\lambda L_y]=(\partial+2\lambda)L_{xy}, \ \ x,y \in \mathbb{A}.\]
Here we use the notation $L_x:=L\otimes x$.
\end{definition}
\begin{remark} The classification of finite irreducible $\mathcal{V}(\mathbb{A})$-modules has been completed by H. Wu in \cite{W1} under the condition that $\mathbb{A}$ is finitely generated.
\end{remark}
From \cite{W1}, $\text{Lie}(\mathcal{V}(\mathbb{A}))^+=\text{Lie}(Vir)^+\otimes \mathbb{A}$. More explicitly, the Lie bracket in $\text{Lie}(\mathcal{V}(\mathbb{A}))^+$ is given by
\[ [L_{(i)}\otimes x, L_{(j)} \otimes y]=(i-j)L_{(i+j-1)}\otimes xy. \]

\begin{lemma}{\label{4343}} Suppose that $\mathcal{V}(\mathbb{A})$ is finite. Then any non-trivial finite free $\mathcal{V}(\mathbb{A})$-module $V$ must contain a non-trivial free rank one submodule. In particular, $\mathcal{V}(\mathbb{A})$ is artinian.
\end{lemma}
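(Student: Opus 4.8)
The plan is to reduce the assertion to the production of a single non-trivial free rank-one $\mathcal{V}(\mathbb{A})$-submodule, and to obtain such a submodule by weight theory over the annihilation algebra. First observe that $\mathcal{V}(\mathbb{A})$ is finite exactly when $\dim_{\mathbb{C}}\mathbb{A}<\infty$, and that $Vir_\epsilon:=\mathbb{C}[\partial]L_\epsilon$ is a copy of $Vir$ sitting inside $\mathcal{V}(\mathbb{A})$ as a subalgebra. The key observation for the ``in particular'' clause is: if $N=\mathbb{C}[\partial]v$ is a free rank-one $\mathcal{V}(\mathbb{A})$-submodule with $L_\epsilon {}_\lambda v=(\partial+\Delta\lambda+c)v$ and $\Delta\neq0$, then $N$ is automatically irreducible. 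Indeed, by Proposition~\ref{p1} it is $Vir_\epsilon$-irreducible when $\Delta\neq0$, and since every $\mathcal{V}(\mathbb{A})$-submodule of $N$ is a fortiori a $Vir_\epsilon$-submodule, $N$ has no nonzero proper $\mathcal{V}(\mathbb{A})$-submodule. Hence it suffices to show that every non-trivial finite free $V$ contains such an $N$ with $\Delta\neq0$; artinianness follows at once.

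Second, I would show $Vir_\epsilon$ acts non-trivially on any non-trivial $V$: if $L_\epsilon$ acted trivially, then evaluating the Jacobi identity on $[L_\epsilon {}_\lambda L_x]=(\partial+2\lambda)L_x$ and comparing degrees in the spectral parameter (as in the first paragraph of the proof of Proposition~\ref{p3}) forces every $L_x$ to act trivially, so $V$ is trivial, a contradiction. Since $Vir$ is artinian, Proposition~\ref{art1} gives a free $Vir_\epsilon$-composition series whose completely non-trivial part $V_t$ is nonzero, with $Vir_\epsilon$-factors $M_{\Delta_i,c_i}$, all $\Delta_i\neq0$. By Proposition~\ref{p5}, $V$ is a conformal module over $\text{Lie}(\mathcal{V}(\mathbb{A}))^e$, where $\text{Lie}(\mathcal{V}(\mathbb{A}))^+=\text{Lie}(Vir)^+\otimes\mathbb{A}$ carries $[L_{(i)}\otimes x,L_{(j)}\otimes y]=(i-j)L_{(i+j-1)}\otimes xy$. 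Grading $V$ by the eigenvalues of $L_{(1)}\otimes\epsilon$ and applying Proposition~\ref{pweight} to $V_t$, the weights occurring in $W(V_t)$ are bounded below with multiplicities at most $\text{rank}(V)$; since $L_{(n)}\otimes x$ shifts weight by $1-n$, I pick a genuine weight vector $v\in W(V_t)$ of minimal real part, with weight $\alpha=\Delta_{i_0}\neq0$ and $(L_{(n)}\otimes x)v=0$ for all $n\geq2$ and $x\in\mathbb{A}$. (One technical point is to rule out a lowering operator carrying $v$ into a weight coming from a trivial $Vir_\epsilon$-top; I expect this to be dispatched by a direct degree argument.)

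Third, I extract the submodule from the lowest weight space. The operators $\{L_{(1)}\otimes x:x\in\mathbb{A}\}$ pairwise commute, since $[L_{(1)}\otimes x,L_{(1)}\otimes y]=0$, and preserve the finite-dimensional weight-$\alpha$ space. Using $[\partial,L_{(1)}\otimes\epsilon]=-L_{(0)}\otimes\epsilon$ one checks that $\partial$ sends the weight-$\alpha$ space into the sum of the weight-$\alpha$ and weight-$(\alpha+1)$ spaces, so the weight-$\alpha$ component $D$ of $\partial$ is a well-defined operator there commuting with every $L_{(1)}\otimes x$. Choosing a common eigenvector $v$ with $(L_{(1)}\otimes x)v=\chi(x)v$ (hence $\chi(\epsilon)=\alpha$) and $Dv=cv$, the identity $\partial v=cv+(L_{(0)}\otimes\epsilon)v$ yields $L_\epsilon {}_\lambda v=(\partial+\alpha\lambda-c)v$, so $\mathbb{C}[\partial]v\cong M_{\alpha,-c}$ as a $Vir_\epsilon$-module, irreducible because $\alpha\neq0$.

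Finally, the main obstacle is to promote $\mathbb{C}[\partial]v$ to a genuine $\mathcal{V}(\mathbb{A})$-submodule, i.e. to control $p_x:=(L_{(0)}\otimes x)v$. Writing $L_x {}_\lambda v=p_x+\lambda\chi(x)v$ with $p_x$ of weight $\alpha+1$, and noting that the weight-$(\alpha+1)$ line of $\mathbb{C}[\partial]v$ is $\mathbb{C}(\partial-c)v$, stability is equivalent to the linear map $x\mapsto p_x$ having rank at most one; the relation $(L_{(1)}\otimes y)p_x=p_{xy}+\chi(y)p_x$ identifies $\{p_x\}$ with the cyclic image of the regular representation of $\mathbb{A}$, so rank one is exactly the statement that $\chi$ is an algebra character of $\mathbb{A}$. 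This is the heart of the argument and the step I expect to be hardest. I would attack it by induction on $\text{rank}(V)$: the submodule $N$ generated by $v$ is a lowest weight module with one-dimensional weight-$\alpha$ space, so if $N\subsetneq V$ the induction hypothesis applies to $N$, while if $N=V$ then $V$ is cyclic of lowest weight, and since the lowerings send each $p_x$ back into $\mathbb{C}v$ (for instance $(L_{(2)}\otimes z)p_x=2\chi(xz)v$), any weight-$(\alpha+1)$ direction independent of $(\partial-c)v$ spawns a second lowest weight vector and hence a proper submodule, contradicting irreducibility; thus the image is one-dimensional, $\chi$ is a character, and $N=\mathbb{C}[\partial]v$ is free of rank one. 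Finite-dimensionality of $\mathbb{A}$ and, where needed, the explicit solutions of Proposition~\ref{key} govern the residual relations. In either case $V$ contains a non-trivial free rank-one submodule, which by the first paragraph is irreducible, so $\mathcal{V}(\mathbb{A})$ is artinian.
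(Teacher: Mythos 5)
Your route differs from the paper's: the paper quotes the structure theorem $\mathcal{V}(\mathbb{A})/\mathrm{Rad}(\mathcal{V}(\mathbb{A}))\cong\bigoplus_i Vir_i$ from \cite{W}, invokes \cite[Theorem 3.3]{XHW} to produce a non-trivial free rank one submodule $U_0=\mathbb{C}[\partial]v_0$ for $Vir_j+\mathrm{Rad}(\mathcal{V}(\mathbb{A}))$, and then closes $U_0$ under the remaining $Vir_i$ by a degree-in-$\mu$ argument. Your lowest-weight construction over $\mathrm{Lie}(\mathcal{V}(\mathbb{A}))^+=\mathrm{Lie}(Vir)^+\otimes\mathbb{A}$ is instead the strategy the paper uses for Proposition \ref{t4}(1), and your first three paragraphs are essentially sound: the reduction to producing one rank-one submodule, the non-triviality of $Vir_\epsilon$, and the computation $L_{\epsilon\,\lambda}v=(\partial+\alpha\lambda-c)v$ for a common eigenvector $v$ of the commuting family $\{L_{(1)}\otimes x\}$ in the lowest weight space all check out.

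However, the step you yourself single out as the heart of the matter --- showing $(L_{(0)}\otimes x)v\in\mathbb{C}(\partial-c)v$ for all $x$, i.e.\ that $\chi$ is $\alpha$ times an algebra character --- is not actually established, and the sketch you give for it fails in two concrete ways. First, the induction on $\mathrm{rank}(V)$ is not well-founded: a proper $\mathcal{V}(\mathbb{A})$-submodule $N\subsetneq V$ (e.g.\ one containing $\partial V$) can have the same $\mathbb{C}[\partial]$-rank as $V$, so ``if $N\subsetneq V$ the induction hypothesis applies'' does not terminate. Second, in the case $N=V$ you derive a contradiction with irreducibility, but $V$ is only assumed to be a non-trivial finite free module --- irreducibility is precisely what artinianness is supposed to deliver, so it cannot be assumed; moreover, a weight-$(\alpha+1)$ vector independent of $(\partial-c)v$ need not be killed by the lowering operators $L_{(2)}\otimes z$ (it is only mapped into the weight-$\alpha$ space, which may exceed $\mathbb{C}v$), so it is not a ``second lowest weight vector'' and does not obviously spawn a proper submodule. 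As it stands, nothing in your argument rules out a lowest-weight common eigenvector whose functional $\chi$ fails to be multiplicative, in which case $\mathbb{C}[\partial]v$ is not $\mathcal{V}(\mathbb{A})$-stable and no rank-one submodule has been produced. To repair this you would either need a genuine argument that $\chi$ kills a cofinite ideal and factors through a character (as the paper does for Proposition \ref{t4}(1) in the infinite-dimensional case), or fall back on the external inputs the paper uses here, namely \cite[Theorem 2.1]{W} and \cite[Theorem 3.3]{XHW}.
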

\begin{proof}
Suppose that $V$ is a non-trivial finite $\mathcal{V}(\mathbb{A})$-module. Since
$\mathcal{V}(\mathbb{A})$ is finite, it has a radical $\text{Rad}(\mathcal{V}(\mathbb{A}))$. by \cite[Theorem 2.1]{W}, $\mathcal{V}(\mathbb{A})/\text{Rad}(\mathcal{V}(\mathbb{A}))\cong \bigoplus_{i=1}^n Vir_i$, where each $Vir_i$ is a Virasoro Lie conformal algebra.
Since both $Vir_i$ and $\text{Rad}(\mathcal{V}(\mathbb{A}))$ are free as $\mathbb{C}[\partial]$-modules, $\mathcal{V}(\mathbb{A})\cong \bigoplus_{i=1}^n Vir_i \oplus \text{Rad}(\mathcal{V}(\mathbb{A}))$ as $\mathbb{C}[\partial]$-modules. It is clear when $\text{Rad}(\mathcal{V}(\mathbb{A}))$ or $\bigoplus_{i=1}^n Vir_i$ acts trivially on $V$. Thus we may assume that both $\text{Rad}(\mathcal{V}(\mathbb{A}))$ and $\bigoplus_{i=1}^n Vir_i$ acts non-trivially on $V$. Suppose that $Vir_j$ acts non-trivially on $V$. By \cite[Thereorem 3.3]{XHW} , there is a non-trivial free rank one $Vir_j+\text{Rad}(\mathcal{V}(\mathbb{A}))$-submodule $U_0$ of $V$, where $U_0=\mathbb{C}[\partial]v_0$ for some $v_0 \in U$. From \cite[Thereorem 3.2]{XHW}, by a suitable choice of $\mathbb{C}[\partial]$-basis of $Vir_j+\text{Rad}(\mathcal{V}(\mathbb{A}))$, for any $x\in \text{Rad}(\mathcal{V}(\mathbb{A}))$, $U$ is a $\mathcal{V}(\mathbb{A})$-submodule of $V$.
\[ {L_j}_\lambda v_0=(\partial+a\lambda+b)v_0, \ x_\lambda v_0=\phi_x(\lambda)v_0. \]
where $L_j$ is a $\mathbb{C}[\partial]$-basis of $Vir_j$ and $\phi_x(\lambda) \in \mathbb{C}[\lambda]$.
For each $i\neq j$, let $L_i$ be a $\mathbb{C}[\partial]$-basis of $Vir_i$. Since for any $i\neq j$, $[{L_j}_\lambda L_i]\in \text{Rad}(\mathcal{V}(\mathbb{A}))[\lambda]$, we have
\[ \phi(\lambda+\mu)v_0={L_1}_\lambda {L_i}_\mu v_0-(\partial+\mu+a\lambda+b){L_i}_\mu v_0, \]
for some $\phi(\mu) \in \mathbb{C}[\mu]$.
Comparing the degree of $\mu$, $ {L_i}_\mu v_0 \in \mathbb{C}[\mu]v_0$. Thus $U_0$ is a $\mathcal{V}(\mathbb{A})$-submodule of $V$.
\end{proof}

\begin{p}{\label{t4}} \begin{enumerate}
\item $\mathcal{V}(\mathbb{A})$ is artinian.
\item If $\mathbb{A}$ is infinite dimensional, then $\mathcal{V}(\mathbb{A})$ does not have finite faithful modules.
\end{enumerate}
\end{p}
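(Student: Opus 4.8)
The plan is to deduce both parts from a single structural fact: \emph{for every non-trivial finite free $\mathcal{V}(\mathbb{A})$-module $V$, the set $K=\{x\in\mathbb{A}\mid {L_x}_\lambda V=0\}$ is a cofinite-dimensional ideal of $\mathbb{A}$ not containing the unit $\epsilon$.} Granting this, (1) is immediate: when $\mathbb{A}$ is finite dimensional $\mathcal{V}(\mathbb{A})$ is finite and Lemma \ref{4343} applies directly, while when $\mathbb{A}$ is infinite dimensional the ideal $Vir\otimes K$ acts trivially on $V$, so $V$ is a module over $\mathcal{V}(\mathbb{A})/(Vir\otimes K)\cong\mathcal{V}(\mathbb{A}/K)$; since $\epsilon\notin K$ the quotient $\mathbb{A}/K$ is a finite-dimensional unital commutative associative algebra, hence $\mathcal{V}(\mathbb{A}/K)$ is finite and, by Lemma \ref{4343}, artinian, so $V$ has a non-trivial irreducible $\mathcal{V}(\mathbb{A}/K)$-submodule, which is a non-trivial irreducible $\mathcal{V}(\mathbb{A})$-submodule. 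For (2), if $\mathbb{A}$ is infinite dimensional then any cofinite $K$ is non-zero, so $Vir\otimes K$ is a non-zero subalgebra annihilating $V$ and $V$ cannot be faithful.

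To establish the structural fact I would follow the analysis of Proposition \ref{p3} verbatim. First, $Vir=\mathbb{C}[\partial]L_\epsilon$ must act non-trivially on $V$, for otherwise $[{L_\epsilon}_\lambda L_x]=(\partial+2\lambda)L_x$ gives $(\lambda-\mu){L_x}_{\lambda+\mu}v=0$, whence ${L_x}_\lambda v=0$ for all $x$ and $V$ is trivial. Since $Vir$ is artinian, Proposition \ref{art1} provides a free composition series of $V$ as a $Vir$-module with non-trivial part $V_t$ having $\mathbb{C}[\partial]$-basis $v_1,\dots,v_t$, composition factors $M_{\Delta_i,c_i}$ with all $\Delta_i\neq0$ (Proposition \ref{p1}), and $V/V_t$ trivial. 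Because $[{L_\epsilon}_\lambda L_x]=(\partial+2\lambda)L_x$, each $L_x$ acts on $V$ exactly as the generator $J_{(2,0)}$ does in Proposition \ref{p3}; the same degree argument therefore gives ${L_x}_\lambda V_t\subset V_t[\lambda]$, and the coefficients of ${L_x}_\lambda$ on the composition factors satisfy (\ref{esx1}) with $a=2$, $b=0$. A non-zero coefficient $f(\partial,\lambda)$ then has $\deg_\lambda f=1+\Delta_j-\Delta_i$ by (\ref{esx66}), and its top homogeneous part solves (\ref{esx3}); since $\Delta_i\neq0$, Proposition \ref{key} forces total degree at most $3$. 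Thus every coefficient of ${L_x}_\lambda$ lies in one of finitely many fixed finite-dimensional spaces, so the assignment $x\mapsto {L_x}_\lambda$ is a $\mathbb{C}$-linear map from $\mathbb{A}$ into a finite-dimensional space of operators on $V$, whose kernel $K$ is cofinite. That $K$ is an ideal follows from $(\lambda-\mu){L_{xy}}_{\lambda+\mu}v={L_y}_\lambda({L_x}_\mu v)-{L_x}_\mu({L_y}_\lambda v)$, which vanishes when $x\in K$; and $\epsilon\notin K$ since $Vir$ acts non-trivially.

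For general finite (not necessarily free) modules in (2), I would pass to the free quotient $F=V/\text{Tor}(V)$, noting that $\text{Tor}(V)$ is a finite-dimensional trivial submodule annihilated by every $L_x$. For $x$ in the cofinite ideal attached to $F$, the operator ${L_x}_\lambda$ then descends to a conformal map $F\to \text{Tor}(V)[\lambda]$, and the same $L_\epsilon$-compatibility plus the degree bound of Proposition \ref{key} confine these maps to a finite-dimensional space; intersecting with the previous $K$ yields a cofinite ideal of $\mathbb{A}$ whose image under $Vir\otimes(-)$ annihilates all of $V$, which is again non-zero and contradicts faithfulness.

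The main obstacle is the uniform degree bound on the polynomials describing ${L_x}_\lambda$, which is exactly what makes $x\mapsto {L_x}_\lambda$ factor through a finite-dimensional target. It rests on the explicit solution of (\ref{esx3}) in Proposition \ref{key} together with the non-vanishing of every weight $\Delta_i$; the delicate point is propagating this bound from the leading composition factors through the whole (possibly non-split, torsion-carrying) filtration, and confirming that the resulting $K$ is genuinely an ideal rather than merely a subspace.
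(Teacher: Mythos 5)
Your proposal is essentially correct but follows a genuinely different route from the paper, especially for part (1). The paper does not try to bound the whole family of operators $\{{L_x}_\lambda\}_{x\in\mathbb{A}}$ at once: instead it passes to the annihilation algebra, uses Proposition \ref{pweight} to realize the completely non-trivial part $V_t$ as a uniformly bounded lowest weight module over $\text{Lie}(Vir)^+$, picks a lowest weight vector $v$ that is a simultaneous eigenvector for the commuting family $L_{(1)}\otimes\mathbb{A}$, observes that $L_{(i)}\otimes\mathbb{A}$ kills $v$ for $i>1$, and then builds the explicit submodule $V_0=\mathbb{C}[\partial,L_{(0)}\otimes\mathbb{A}]\cdot v$; the uniform bound on $\dim V_0[\alpha+1]$ produces a cofinite ideal $\mathcal{J}$ annihilating $V_0$, reducing to the finite-dimensional case handled by Lemma \ref{4343}. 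For part (2) the paper uses the explicit rank-one classification from \cite{W} and induction on the non-trivial index as in Proposition \ref{psx6}. Your single structural fact (cofiniteness of $K=\{x:{L_x}_\lambda V=0\}$, plus the check that $K$ is an ideal not containing $\epsilon$) is stronger and yields both parts at once, which is cleaner; what the paper's weight-theoretic argument buys is that it only needs finite-dimensionality of a single weight space rather than of the full space of intertwining operators. The one step you assert rather than prove is exactly the latter: equation (\ref{esx1}) and Proposition \ref{key} control only the \emph{leading} matrix coefficient of ${L_x}_\lambda$ with respect to the filtration (and Proposition \ref{key} needs $\Delta_i\neq 0$, which requires refining the filtration to separate the torsion layers, where the generator of a layer $V_i/V_{i-1}$ may transform with $\Delta_i=0$); the remaining coefficients, the action on the trivial quotient $V/V_t$, and the maps into $\text{Tor}(V)$ satisfy inhomogeneous versions of (\ref{esx1}) whose solution sets are affine over finite-dimensional homogeneous solution spaces, so finite-dimensionality does follow by induction on the filtration length, but this needs to be said. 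You flag this yourself as the delicate point; once it is written out, your argument is complete and, in my view, somewhat more transparent than the paper's.
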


\begin{proof} (1)By Lemma \ref{4343}, we may assume that $\mathbb{A}$ is infinite dimensional. Note that $\mathbb{C}[\partial]L_\epsilon$ is the Virasoro Lie conformal algebra. Regard $V$ as a non-trivial $Vir$-module. Suppose that $V$ has a free composition series $(\ast)$ of $Vir$-submodules. By Proposition {\ref{p3}}, $V_t$ is a completely non-trivial $Vir$-submodules of $V$.
$W(V_t)$ is a uniformly bounded lowest weight module over $\text{Lie}(Vir)^+$, according to Proposition \ref{pweight}. Let $\alpha$ be the lowest weight in $\omega(W(V_t))$. Since
\[({L}_{(1)}\otimes \mathbb{A} )\cdot W(V_t)[\alpha] \subset W(V_t)[\alpha] \] and ${L}_{(1)}\otimes \mathbb{A}$ is commutative, there exists $v \in W(V_t)[\alpha]$ and a linear functional $\xi \in \mathbb{A}^*$ such that $(L_{(1)}\otimes x)\cdot v=\xi(x)v$ for $x\in \mathbb{A}$. On the other hand, by the choice of $v$, $L_{(i)}\otimes \mathbb{A}$ annihilates $v$ for $i>1$. Define $V_0=\mathbb{C}[\partial,L_{(0)}\otimes \mathbb{A}]\cdot v$. One can check straightly that $\mathbb{C}[\partial,L_{(0)}\otimes \mathbb{A}]\cdot v$ is a $\mathcal{V}(\mathbb{A})$-submodule of $V$. Since $(L_{(0)}\otimes \mathbb{A})\cdot v \subset V_0[\alpha+1]$, then there exists a cofinite dimensional idea $\mathcal{J}$ of $A$ such that for any $y\in \mathcal{J}$, $(L_{(0)}\otimes y)\cdot v=0$. Thus $(L_{(0)}\otimes y )\cdot V_0=\{0\}$ for any $y\in \mathcal{J}$. Since for any $i \in \mathbb{Z}^+$, $-(i+1)L_{(i)}\otimes y=[L_{(0)}\otimes y, L_{(i+1)}\otimes \epsilon]$,
we have $(L\otimes y)_\lambda v=0$. Therefore, $(L\otimes \mathcal{J})_\lambda V_0=\{0\}$.
Thus $V_0$ is seen as a module of $\mathcal{V}(\mathbb{A}/\mathcal{J})$, which can be reduced to the case when $\mathbb{A}$ is finite-dimensional. Then the proof is finished.

(2)From \cite{W}, we know that the free rank one finite $\mathcal{V}(\mathbb{A})$-module must be of the form $M_{\pi,\Delta,c}=\mathbb{C}[\partial]v$ given by
\[ {L_x}_\lambda v=\pi(x)(\partial+\Delta\lambda+c)v\] for any $x \in \mathbb{A}$,
where $\pi$ is an algebra morphism from $\mathbb{A}$ to $\mathbb{C}$ and $\Delta$, $c\in \mathbb{C}$. Hence there exists a cofinite ideal $\mathcal{J}$ of $\mathbb{A}$ such that $L\otimes \mathcal{J}$ acting trivially on $M_{\pi,\Delta,c}$. Since $\mathcal{V}(\mathbb{A})$ is artinian, any finite free $\mathcal{V}(\mathbb{A})$-module $V$ has a free composition series. Then the rest of the proof can be finished by induction on the non-trivial index as that in the proof of Proposition \ref{psx6}.
\end{proof}

\section{Finite irreducible modules of a class of $\mathbb{Z}^+$-graded Lie conformal algebras}

In this section, we will study finite irreducible modules of a class of $\mathbb{Z}^+$-graded Lie conformal algebras by using the results obtained in Section 3.

Suppose that $\mathcal{L}=\bigoplus_{i \in \mathbb{Z}^+}\mathcal{L}_i$ is a graded Lie conformal algebra, where each $\mathcal{L}_i$ is a free rank one $\mathbb{C}[\partial]$-module with the generator $L_i$. Furthermore, we assume that $\mathcal{L}_0$ is the Virasoro Lie conformal algebra. Explicitly, $[{L_i}_\lambda {L_j}]=p_{i,j}(\partial,\lambda)L_{i+j}$ for some $p_{i,j}(\partial,\lambda)\in \mathbb{C}[\partial,\lambda]$.
Set \[ I_0=\{i \in \mathbb{Z}^+| p_{0,i}(\partial,\lambda)\neq 0 \}, \]
\[ I_1=\{i\in \mathbb{Z}^+| p_{0,i}(\partial,\lambda)= 0 \}. \]
For $i \in I_0$, by Proposition {\ref{p1}}, $p_{0,i}(\partial,\lambda)=\partial+a_i\lambda+b_i$ for some $a_i , b_i\in \mathbb{C}$.

The following two examples are derived from \cite{W} and \cite{SXY} respectively.

\begin{example}The map Lie conformal algebra $\mathcal{V}(\mathbb{C}[T])=Vir\otimes \mathbb{C}[T]$, has a $\mathbb{C}[\partial]$-basis $\{L\otimes T^i|i\in \mathbb{Z}^+\}$ with the following $\lambda$-brackets:
\[[(L\otimes T^i)_\lambda (L \otimes T^j)]=(\partial+2\lambda)(L\otimes T^{i+j}).\]
\end{example}
\begin{example} The Lie conformal algebra $\mathcal{B}(p)$, with a nonzero $p\in \mathbb{C}$, has a $\mathbb{C}[\partial]$-basis $\{L_i|i\in \mathbb{Z}^+\}$ with the following $\lambda$-brackets:
\[[{L_i}_\lambda L_j]=((i+p)\partial+(i+j+2p)\lambda)L_{i+j}.\]
Notice that $[{\frac{1}{p}L_0}_\lambda {\frac{1}{p}L_0}]=(\partial+2\lambda)\frac{1}{p}L_0$.
\end{example}

\begin{p}\label{psx8} $\mathcal{L}=\mathcal{G}_0\oplus \mathcal{G}_1$ as Lie conformal algebras, where \[ \mathcal{G}_i=\bigoplus_{j\in I_i}\mathbb{C}[\partial]L_j, i=0,1. \]. \end{p}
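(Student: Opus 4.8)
The plan is to extract a single master relation from the Jacobi identity applied to the triple $(L_0,L_i,L_j)$ and then read off the decomposition by a case analysis on the membership of $i$, $j$ and $i+j$ in $I_0$ or $I_1$. Writing $[{L_i}_\lambda L_j]=p_{i,j}(\partial,\lambda)L_{i+j}$ and expanding the Jacobi identity $[{L_0}_\lambda[{L_i}_\mu L_j]]=[[{L_0}_\lambda L_i]_{\lambda+\mu}L_j]+[{L_i}_\mu[{L_0}_\lambda L_j]]$ by conformal sesquilinearity, I obtain
\begin{equation}
p_{0,i+j}(\partial,\lambda)\,p_{i,j}(\partial+\lambda,\mu)=p_{0,i}(-\lambda-\mu,\lambda)\,p_{i,j}(\partial,\lambda+\mu)+p_{0,j}(\partial+\mu,\lambda)\,p_{i,j}(\partial,\mu),\tag{$\star$}
\end{equation}
valid for all $i,j\in\mathbb{Z}^+$. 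Recall from Proposition \ref{p1} that $p_{0,k}=\partial+a_k\lambda+b_k$ has $\partial$-degree exactly one when $k\in I_0$, while $p_{0,k}=0$ when $k\in I_1$; this exact-degree-one feature is what drives every contradiction below.

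First I would dispose of $\mathcal{G}_1$. Since $0\in I_0$, a direct computation on a general element $x=\sum_k f_k(\partial)L_k$ shows that $\mathcal{G}_1=\{x\in\mathcal{L}\mid[{L_0}_\lambda x]=0\}$ is precisely the centralizer of $L_0$, and the Jacobi identity $[{L_0}_\mu[x_\lambda y]]=[[{L_0}_\mu x]_{\mu+\lambda}y]+[x_\lambda[{L_0}_\mu y]]$ shows this centralizer is closed under the bracket; hence $\mathcal{G}_1$ is a subalgebra. (Equivalently, for $i,j\in I_1$ the two right-hand terms of ($\star$) vanish, so whenever $i+j\in I_0$ the factor $p_{0,i+j}\neq0$ forces $p_{i,j}=0$.)

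Next comes the cross term $[\mathcal{G}_0{}_\lambda\mathcal{G}_1]=0$, i.e. $p_{i,j}=0$ whenever $i\in I_0$ and $j\in I_1$. Here $p_{0,j}=0$ kills the last term of ($\star$). If $i+j\in I_1$ the left side vanishes and, since $p_{0,i}(-\lambda-\mu,\lambda)\neq0$, we get $p_{i,j}=0$; if instead $i+j\in I_0$, comparing $\partial$-degrees of the two surviving terms gives $\deg_\partial p_{i,j}+1=\deg_\partial p_{i,j}$, which is impossible unless $p_{i,j}=0$. The same bookkeeping settles closure of $\mathcal{G}_0$: for $i,j\in I_0$ with $i+j\in I_1$ the left side of ($\star$) is zero, and matching the top $\partial$-power in $p_{0,j}(\partial+\mu,\lambda)\,p_{i,j}(\partial,\mu)$ against the lower-$\partial$-degree term $p_{0,i}(-\lambda-\mu,\lambda)\,p_{i,j}(\partial,\lambda+\mu)$ again forces $p_{i,j}=0$. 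Thus $[{L_i}_\lambda L_j]\in\mathcal{G}_0[\lambda]$ for all $i,j\in I_0$, so $\mathcal{G}_0$ is a subalgebra.

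With these three facts in hand, $\mathcal{L}=\mathcal{G}_0\oplus\mathcal{G}_1$ as $\mathbb{C}[\partial]$-modules (since $\mathbb{Z}^+=I_0\sqcup I_1$), both summands are subalgebras, and their mutual $\lambda$-bracket vanishes; consequently each is an ideal and $\mathcal{L}$ is their direct sum as Lie conformal algebras. I expect the only genuinely delicate point to be the degree bookkeeping in the two cases where $i+j$ lands in the ``wrong'' block: this is exactly where the exact-degree-one shape of $p_{0,k}$ for $k\in I_0$ is indispensable, and one must check that the cancellation really forces the leading coefficient (a nonzero polynomial in the remaining variable) to vanish, rather than merely constraining it.
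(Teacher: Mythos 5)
Your proof is correct and takes essentially the same route as the paper: your master relation $(\star)$ is exactly the paper's Jacobi identity for the triple $(L_0,L_i,L_j)$, and your case analysis via $\partial$-degree (or, equivalently, $\mu$-degree) comparisons, exploiting that $p_{0,k}$ has $\partial$-degree exactly one for $k\in I_0$, is the paper's argument. The only cosmetic difference is your repackaging of $\mathcal{G}_1$ as the centralizer of $L_0$ to get closure under the bracket.
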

\begin{proof}For any $i$, $j\in \mathbb{Z}^+$, consider the Jacobi-identity
\[ [{L_0}_\lambda [{L_i}_\mu L_j]]-[{L_i}_\mu [{L_0} _\lambda L_j]]=[[{L_0}_\lambda L_i]_{\lambda+\mu} L_j].\]
When $i\in I_0$ and $j\in I_1$, we get
\begin{eqnarray}\label{Bl1}
p_{i,j}(\partial+\lambda,\mu)[{L_0}_\lambda L_{i+j}]=(-\lambda-\mu+a_i\lambda+b_i)p_{i,j}(\partial,\lambda+\mu).
\end{eqnarray}
Comparing the degrees of $\mu$ in (\ref{Bl1}), we get $p_{i,j}(\partial,\lambda)=0$. Therefore,
$[{\mathcal{G}_0}_\lambda \mathcal{G}_1]=\{0\}$.

When $i$, $j\in I_0$, we obtain
\begin{equation}\label{esx22}(-\lambda-\mu+a_i\lambda+b_i)p_{i,j}(\partial,\lambda+\mu)=p_{i,j}(\partial+\lambda,\mu)[{L_0}_{\lambda} {L_{i+j}}]-(\partial+\mu+a_j\lambda+b_j)p_{i,j}(\partial,\mu).
\end{equation}
The degree of $\partial$ in (\ref{esx22}) forces that $i+j \in I_0$.
For $i,j \in I_1$, Jacobi-identity requires
\[0=[{L_0}_\lambda [{L_i}_\mu {L_j}]]. \]
Thus either $[{L_i}_\mu {L_j}]=0$ or $[{L_0}_\lambda L_{i+j}]=0$. In both cases, $\mathcal{G}_1$ is a subalgebra of $\mathcal{L}$.
\end{proof}

In the sequel, we shall classify all non-trivial finite irreducible modules of $\mathcal{L}$ under the condition that $[{L_1}_\lambda L_i] \neq 0$ for any $i\geq 0$.
In this case, by Proposition {\ref{psx8}, $\mathcal{G}_1=\{0\}$.
By (\ref{esx22}) and Proposition {\ref{p3}}, $b_i=ib_1$ for each $i$. Note that by (\ref{esx22}) and Equation (\ref{esx66}), if $p_{i,j}(\partial,\lambda)\neq 0$,
\[ deg_\lambda p_{i,j}(\partial,\lambda)=a_i+a_j-a_{i+j}-1, \text{for any $i$, $j\in \mathbb{Z}^+$. }\]
\begin{remark}In general, $\mathcal{L}$ may be semisimple. If $\mathcal{L}$ has a non-zero solvable ideal, then there exsits some nonzero $a \in \mathcal{L}$ such that $[a_\lambda a]=0$, which is impossible in the case $\mathcal{L}=\mathcal{V}(\mathbb{C}[T])$. \end{remark}

Moreover, we can describe the value of $a_1$ as follows.
\begin{lemma}\label{key1} If the sequence $\{a_i\}$ has finitely many different values and $b_1=0$, then $a_1$ must be one of the following form :

\noindent (1) $a_1=2$; (2) $a_1=2-\frac{1}{p}$ for some $p \in \mathbb{Z}^+$; (3) $a_1=2-\frac{2}{q}$ for some positive odd number $q$. \end{lemma}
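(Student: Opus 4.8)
The plan is to turn the hypotheses into a recursion for $\{a_i\}$ and then pin down $a_1$ by combining finiteness with the list in Proposition \ref{key}. Since $b_1=0$ gives $b_i=0$ for all $i$, the Jacobi identity for $(L_0,L_1,L_i)$ yields, exactly as in the passage producing (\ref{esx22}) and (\ref{esx3}), that the top homogeneous part of $p_{1,i}(\partial,\lambda)$ solves (\ref{esx3}) with $a=a_1$, $\Delta_j=a_i$ and $\Delta_i=a_{i+1}$. For a fixed triple $(a,\Delta_i,\Delta_j)$ the classification in Proposition \ref{key} supports a nonzero solution of only one degree, and the full equation (all additive constants now being $0$) preserves total degree; hence each $p_{1,i}$ is homogeneous, equal up to a scalar to one of the listed polynomials, and by (\ref{esx66}) its degree is $k_i:=\deg_\lambda p_{1,i}=a_1+a_i-a_{i+1}-1\in\{0,1,2,3\}$. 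This is the recursion $a_{i+1}=a_1+a_i-1-k_i$ with $a_0=2$ that I will study.

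First I would record the admissible steps. Reading Proposition \ref{key} for $p_{1,i}$ (whose parameter is $a_1$), a degree $k_i=2$ forces $a_i=1$ and $k_i=3$ forces $a_1=a_i=\tfrac53$; thus, away from $a_1\in\{1,\tfrac53\}$, one has $k_i\in\{0,1\}$ except at the value $a_i=1$, where $k_i=2$ (sending $a_{i+1}=a_1-2$) is also allowed. Applying skew-symmetry to the self-bracket, $p_{1,1}(\partial,\lambda)=-p_{1,1}(\partial,-\partial-\lambda)$, rules out $k_1=0$ (a nonzero constant is not self-skew), so $k_1=1$; the $k=1$ solution of Proposition \ref{key} then gives $p_{1,1}=\partial+2\lambda$ and $a_2=2a_1-2$. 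The two values $a_1=1$ and $a_1=\tfrac53$ already appear on the list, so I dispose of them separately and assume $a_1\notin\{1,\tfrac53\}$ from here on.

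Next I use finiteness twice. From $a_{i+1}-a_i\equiv a_1\pmod 1$ we get $a_i\equiv i\,a_1\pmod 1$, so $\{a_i\}$ finite forces $a_1\in\mathbb{Q}$. Since the sequence is infinite with finitely many values, both $A=\max_i a_i\ (\ge a_0=2)$ and $B=\min_i a_i$ are attained and have successors: the step leaving $B$ satisfies $k\le a_1-1$, whence $a_1\ge1$; the step leaving $A$ satisfies $k\ge a_1-1$, and as $A\ne1$ only $k\in\{0,1\}$ is available there, so $k\ge a_1-1$ forces $a_1\le2$. Thus $a_1\in\mathbb{Q}\cap[1,2]$, and only the precise arithmetic of $a_1$ remains.

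The decisive and hardest step is to prove $2/(2-a_1)\in\mathbb{Z}^+$, which is precisely equivalent to the three cases in the statement ($a_1=2$; $2-a_1=1/p$, i.e. $a_1=2-1/p$; or $2-a_1=2/q$ with $q$ odd, i.e. $a_1=2-2/q$). The data gathered so far does not suffice: a bare up/down walk with steps $a_1-1$ and $a_1-2$ can return to finitely many values for spurious rationals such as $a_1=\tfrac54$, and each individual step is locally Jacobi-consistent. To eliminate these I would substitute the now-explicit homogeneous candidates $p_{1,i}$ into the higher Jacobi identities for $(L_1,L_1,L_i)$ --- which, using $p_{1,1}=\partial+2\lambda$, couple $p_{1,i}$, $p_{1,i+1}$ and $p_{2,i}$ --- and show inductively, starting from $a_2=2a_1-2$, that the sequence is pinned to the descending branch $2-a_i=i\,(2-a_1)$ until it meets an exceptional value where Proposition \ref{key} ceases to apply, namely $a_i=1$ (where $k_i=2$ opens up) or $a_i=0$ (where $\Delta_i=0$). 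The descending branch reaches $a_i=0$ at index $i=2/(2-a_1)$, and I expect to show that the sequence can close up into finitely many values only when this index is a positive integer, which is exactly the claimed condition; the even/odd dichotomy of the statement then reflects whether the descent also passes through $a_i=1$ ($a_1=2-1/p$, even denominator) or skips it ($a_1=2-2/q$, odd $q$). Executing this induction and, in particular, analyzing the transitions at $a_i\in\{0,1\}$ that lie outside Proposition \ref{key} is the main obstacle.
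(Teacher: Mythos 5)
Your preparatory work is correct, and in one respect tidier than the paper's: the congruence $a_i\equiv i\,a_1\pmod 1$ yields rationality of $a_1$ in one line (the paper only extracts reality from the imaginary parts and then uses monotonicity), and your max/min argument for $1\le a_1\le 2$ is sound. But the proof stops exactly where the lemma begins. As you yourself observe, everything established so far is compatible with spurious values such as $a_1=\tfrac54$ (the walk $2\to\tfrac54\to\tfrac12\to\tfrac34\to 1\to\cdots$ with steps $a_1-1$ and $a_1-2$ closes up), and the passage from $a_1\in\mathbb{Q}\cap[1,2]$ to the stated trichotomy, i.e.\ to $2/(2-a_1)\in\mathbb{Z}^+$, is only announced (``I would substitute\dots'', ``I expect to show\dots''), not carried out. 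Since that passage is the entire content of the lemma, this is a genuine gap rather than a stylistic omission.

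Moreover, the plan you sketch is pointed at a picture the paper does not (and need not) establish: the sequence is not shown to be ``pinned to the descending branch until it meets $a_i\in\{0,1\}$''. What the paper controls is the first index $k$ at which the descent stops (the first $i$ with $\deg_\lambda p_{1,i}=0$), which a priori can occur at any value of $a_k$, and it then extracts the arithmetic of $a_1$ from two ingredients absent from your outline. First, skew-symmetry of the \emph{diagonal} brackets: whenever the degree formula gives $\deg_\lambda p_{j,j}=2a_j-a_{2j}-1=0$, skew-symmetry forces $p_{j,j}=0$; this is triggered according to the parity of $k+1$, and it is precisely this parity split that produces the even/odd dichotomy between cases (2) and (3). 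Second, Jacobi identities for triples $(L_1,L_{i-1},L_{j})$ with $i\ge 2$: once some $p_{i_0,j_0}$ is known to vanish (with $(i_0,j_0)$ chosen minimal), one obtains an identity of the form (\ref{e100}) whose right-hand side carries the linear factor $\partial+\mu+\frac{j_0(a_1-2)+a_1}{a_1-1}\lambda$, and divisibility of the left-hand side by this factor forces $\frac{j_0(a_1-2)+a_1}{a_1-1}=1$, i.e.\ $a_1=2-\frac{1}{j_0}$. Your proposed identities $(L_1,L_1,L_i)$ couple only $p_{1,\cdot}$ with $p_{2,\cdot}$ and cannot by themselves detect this parity obstruction. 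To complete the argument you would need to reproduce (or replace) the diagonal-vanishing and divisibility analysis; the $\Delta_i=0$ degeneration of Proposition \ref{key} that you flag is a real but secondary issue.
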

\begin{proof} For any $j \in \mathbb{Z}^+$, since $a_{j+1}=a_1+a_j-1-deg_\lambda p_{1,j}(\partial,\lambda)$, $Im(a_j)=j Im(a_1)$. Hence $a_j\in \mathbb{R}$. If $a_1\geq 2$, then $a_2=a_1+a_1-1-deg_\lambda p_{1,1}(\partial,\lambda)$. By Proposition \ref{key}, $deg_\lambda p_{1,1}(\partial,\lambda)\leq 1$. Thus $a_2\geq a_1$. Repeating this process, one can prove that $\{a_i\}$ is strictly increasing unless $a_i=2$ for all $i$. Similarly, $\{a_i\}$ is strictly decreasing in the case when $a_1<1$. As a consequence, $1\leq a_1\leq 2$.

If $a_1=2$ or $a_1=1$ or $a_1=\frac{5}{3}$, then the proposition is clear. Hence, we further consider the case when $1<a_1<2$ and $a_1\neq \frac{5}{3}$. Let $k$ be the smallest integer such that $a_i>a_j$ for any $i<j\leq k$ and $a_{k+1}>a_k$. It is equivalent to saying that $\deg_\lambda p_{1,i}(\partial,\lambda)>0$ for $i< k$ and $deg_\lambda p_{1,k}(\partial,\lambda)=0$. Since $p_{1,1}(\partial,\lambda)$ is divisible by $\partial+2\lambda$, then $k\geq 2$. If there is some $j \leq k$ such that $deg_\lambda p_{1,j}(\partial,\lambda)>1$, by Proposition \ref{key}, $\deg_\lambda p_{1,j}(\partial,\lambda)=2$ with $a_j=1$. Let $j_0$ be the least one of such $j$. Then $a_{j_0}=({j_0-1})(a_1-2)+a_1=1$. It implies that $a_1=2-\frac{1}{j_0}$.

Assume that $\deg_\lambda p_{1,i}(\partial,\lambda)=1$ for any $i\leq k$. Thus $a_i=(i-1)(a_1-2)+a_1$ for any $i\leq k$ and $a_{k+1}=k(a_1-2)+a_1+1$.

\noindent {\bf Case I:} $k+1=2j$ for some integer $j$. Then $[{L_j}_\lambda L_j]=0$. Otherwise, \[deg_\lambda{p_{j,j}(\partial,\lambda)}=2a_j-a_{2j}-1=2a_j-a_k-a_1=0,\] which is impossible by skew-symmetric property. By Jacobi-Identity \[ [{L_1}_\lambda L_{j-1}]_{\lambda+\mu}L_j ={L_1}_\lambda {L_{j-1}}_\mu L_j-{L_{j-1}}_\mu {L_1}_\lambda L_j, \] we have
\[p_{j-1,j}(\partial+\lambda,\mu)p_{1,k}(\partial,\lambda)=p_{j-1, j+1}(\partial,\mu)(\partial+\mu+\frac{j(a_1-2)+a_1}{a_1-1}\lambda).\]
Since $p_{1,k}(\partial, \lambda)$,$p_{j-1,j+1}(\partial, \lambda)$ are constant, either $p_{j-1,j}(\partial, \lambda)=0$ or $1=\frac {j(a_1-2)+a_1}{a_1-1}$. The latter one implies that $a_1=2-\frac{1}{j}$.
If $p_{j-1,j}(\partial, \lambda)=0$, then let $i_0$ be the smallest integer such that $p_{i_0,j_0}(\partial, \lambda)=0$ for some $j_0 \leq k-i_0$.
Again by Jacobi-identity, we have
\begin{equation}{\label{e100}}
p_{i_0-1, j_0}(\partial+\lambda,\mu)p_{1,i_0+j_0-1}(\partial,\lambda)=p_{i_0-1, j_0+1}(\partial,\mu)(\partial+\mu+\frac{j_0(a_1-2)+a_1}{a_1-1}\lambda).
\end{equation}
Since $p_{i_0-1, j_0}(\partial, \lambda) \neq 0$, $p_{i_0-1, j_0}(\partial, \lambda)$ can be divisible by $\partial+\mu+\frac{j_0(a_1-2)+a_1}{a_1-1}\lambda$, which forces that $\frac{j_0(a_1-2)+a_1}{a_1-1}=1$. Therefore $a_1=2-\frac{1}{j_0}$.

\noindent {\bf Case II:} $k+1=2j-1$ for some integer $j$. If $deg_\lambda p_{1,k+1}(\partial,\lambda)=2$, then $a_{k+1}=k(a_1-2)+a_1+1=1$. Thus $a_1=2-\frac{2}{k+1}$. Let us assume that $deg_\lambda p_{1,k+1}(\partial,\lambda)\leq 1$. Since
\[a_{k+2}=a_1+a_{k+1}-1-deg_\lambda p_{1, k+1}(\partial,\lambda)=2a_j-1-deg_\lambda p_{j,j}(\partial,\lambda),\]
it forces $ p_{j,j}(\partial, \lambda)=0$. Then by using a similar proof as that in Case I, we can obtain that $a_1=2-\frac{1}{j_0}$ for some positive $j_0$.
\end{proof}
\begin{p} {\label{psx5}} If the sequence $\{a_i\}$ has finitely many different values and $b_1=0$, $\mathcal{L}$ contains $Vir\ltimes_a \text{Cur} \mathfrak{g}$ for some infinite dimensional Lie algebra or $\mathcal{V}(\mathbb{A})$ with $\mathbb{A}$ an infinite dimensional commutative associative algebra.
\end{p}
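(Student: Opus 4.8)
The plan is to split according to the value of $a_1$ furnished by Lemma \ref{key1} and, in each case, exhibit the required subalgebra inside $\mathcal{L}$ using the degree identity $\deg_\lambda p_{i,j}=a_i+a_j-a_{i+j}-1$ (valid whenever $p_{i,j}\ne 0$) together with the explicit solutions listed in Proposition \ref{key}.

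First I would dispose of the case $a_1=2$. As observed in the proof of Lemma \ref{key1}, if $a_1=2$ then $\{a_i\}$ is strictly increasing unless $a_i=2$ for all $i$; since $\{a_i\}$ takes finitely many values we must have $a_i=2$ for every $i$. For $i,j$ with $p_{i,j}\ne0$ the top-degree part of $p_{i,j}$ solves (\ref{esx22}) with $a_i=a_j=a_{i+j}=2$, and Proposition \ref{key} (the $a\ne1,\,k=1$ branch) forces it to be proportional to $\partial+2\lambda$; plugging back into (\ref{esx22}) shows no lower-degree term survives, so $p_{i,j}(\partial,\lambda)=c_{i,j}(\partial+2\lambda)$ for scalars $c_{i,j}$. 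Setting $e_ie_j:=c_{i,j}e_{i+j}$ on $\mathbb{A}=\bigoplus_i\mathbb{C}e_i$, skew-symmetry yields commutativity, the Jacobi identity of $\mathcal{L}$ yields associativity, and $e_0$ is a unit; hence $\mathcal{L}\cong\mathcal{V}(\mathbb{A})$ with $\dim\mathbb{A}=\infty$.

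Now the cases $1\le a_1<2$. Since $\{a_i\}$ is a bounded real sequence attaining finitely many values, let $a^\ast$ be the least value attained infinitely often, put $T=\{i:a_i=a^\ast\}$, choose $N$ larger than the finitely many indices of weight $<a^\ast$, and set $T'=\{i\in T:i>N\}$, which is still infinite; then $a_{i+j}\ge a^\ast$ for all $i,j\in T'$. I claim $\mathfrak{S}=\mathbb{C}[\partial]L_0\oplus\bigoplus_{i\in T'}\mathbb{C}[\partial]L_i$ is the desired subalgebra. For the diagonal, skew-symmetry makes $2\lambda+\partial$ divide $p_{i,i}$, so $\deg_\lambda p_{i,i}\ge1$ when nonzero, giving $a_{2i}\le 2a^\ast-2$; with $a_{2i}\ge a^\ast$ this forces $a^\ast\ge2$, contradicting $a^\ast\le 1$ (last paragraph), so $p_{i,i}=0$. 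For $i\ne j$ in $T'$, if $p_{i,j}\ne0$ then $a_{i+j}=2a^\ast-1-\deg_\lambda p_{i,j}\le 2a^\ast-1$, and $a_{i+j}\ge a^\ast$ gives $a^\ast\ge1$. Thus when $a^\ast<1$ every $p_{i,j}$ vanishes and $\mathfrak{S}\cong Vir\ltimes_{a^\ast}\text{Cur}\,\mathfrak{g}$ with $\mathfrak{g}$ an infinite-dimensional abelian Lie algebra (legitimate for arbitrary $a^\ast$ by Example \ref{sxe1}), while when $a^\ast=1$ we get $a_{i+j}=1$ and $\deg_\lambda p_{i,j}=0$, so by Proposition \ref{key} $p_{i,j}=c_{i,j}$ is constant with $i+j\in T'$; then $[e_i,e_j]=c_{i,j}e_{i+j}$ defines a Lie bracket on $\mathfrak{g}=\bigoplus_{i\in T'}\mathbb{C}e_i$ (Jacobi descends from $\mathcal{L}$) and $\mathfrak{S}\cong Vir\ltimes_1\text{Cur}\,\mathfrak{g}$ with $\dim\mathfrak{g}=\infty$.

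The step I expect to be the main obstacle is the claim $a^\ast\le1$, i.e.\ that the smallest weight occurring infinitely often does not exceed $1$. This is where the fine structure of $\{a_i\}$ enters: the increments satisfy $a_{i+1}-a_i=(a_1-1)-\deg_\lambda p_{1,i}$ with $\deg_\lambda p_{1,i}\in\{0,1,2,3\}$ by Proposition \ref{key}, and boundedness forces the average of $\deg_\lambda p_{1,i}$ to equal $a_1-1$. Carrying the analysis of Lemma \ref{key1} one step further should show that for $a_1=2-\tfrac1p$ the sequence is periodic of period $p$ with minimum value $a_p=1$, and for $a_1=2-\tfrac2q$ it is periodic with minimum value $\tfrac{q-1}{q}<1$; in either case the minimum is attained infinitely often, giving $a^\ast\le1$. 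Making this recurrence rigorous — rather than merely reading it off the first period — is the delicate point, and it rests entirely on Lemma \ref{key1} and Proposition \ref{key}.
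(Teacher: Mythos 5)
Your reduction of the case $a_1=2$ to a map Virasoro algebra, and your treatment of the sets where the weight attains its smallest infinitely-recurring value $a^\ast$, are sound and parallel to what the paper does. But the whole argument for $1\le a_1<2$ hangs on the claim $a^\ast\le 1$, and this is exactly the step you leave unproved: the sketched recurrence $a_{i+1}-a_i=(a_1-1)-\deg_\lambda p_{1,i}$ does not determine the sequence (the degrees $\deg_\lambda p_{1,i}$ are not forced), so the asserted periodicity with minimum $1$ (resp.\ $\tfrac{q-1}{q}$) does not follow from Lemma \ref{key1} and Proposition \ref{key} as stated. This is not a cosmetic omission: if $a^\ast>1$, then for $i,j\in T'$ with $p_{i,j}\ne 0$ one has $a_{i+j}=2a^\ast-1-\deg_\lambda p_{i,j}$, which can exceed $a^\ast$, so $L_{i+j}$ escapes $T'$ and your $\mathfrak{S}$ is not even closed under the $\lambda$-bracket. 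So the proposal as written has a genuine gap at its acknowledged ``delicate point.''

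The paper never proves $a^\ast\le 1$; it sidesteps the issue by a two-sided analysis. Writing $j=\varliminf_i a_i$ and $m=\varlimsup_i a_i$, it treats four cases: if $j\le 1$ it extracts a current subalgebra from the indices realizing $j$ (your construction); if $j>1$ it first shows $m\ge 2$ (using that $\deg_\lambda p_{1,k}=0$ at an index realizing $j$, so $a_{k+1}=a_1+j-1$, together with the arithmetic constraints on $a_1$ from Lemma \ref{key1}), and then works with $\mathcal{L}_{sup}$, the span of the $L_i$ with $a_i=m$ and $i$ large: when $m>2$ all brackets inside $\mathcal{L}_{sup}$ must vanish (a nonzero bracket would give $a_{i+j}>m$), yielding $Vir\ltimes_m\text{Cur}\,\mathfrak{g}$ with $\mathfrak{g}$ infinite dimensional abelian, and when $m=2$ one gets an infinite-dimensional map Virasoro subalgebra. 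To repair your proof you would either have to supply a genuine argument that $a^\ast\le 1$ whenever $a_1<2$, or add the complementary analysis at the supremum as the paper does.
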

\begin{proof} If the sequence $\{a_i\}$ has finitely many different values, by Lemma \ref{key1}, each $a_i$ is a real number. In this case, we can define limits on $\{a_i\}$ . Suppose $\varlimsup_{i} \{a_i\}=m$ and $\varliminf_{i} \{a_i\}=j$ . Thus there exists $N$ such that for any $n>N$, $j\leq a_n\leq m$. Define
\[ \mathcal{L}_{sup}: \mathbb{C}[\partial]\text{-module generated by} \{L_i|i>N, a_i=m\},\]
\[ \mathcal{L}_{inf}: \mathbb{C}[\partial]\text{-module generated by } \{L_i|i> N, a_i=j\},\]
By constrcution, we can see that $\mathcal{L}_{sup}$ and $\mathcal{L}_{inf}$ are both infinite rank.

\noindent {\bf Case I:} $m>2$. For any $L_{n_1}$, $L_{n_2} \in \mathcal{L}_{sup}$, by Proposition {\ref{key}}, if $p_{n_1,n_2}(\partial, \lambda)\neq 0$, $deg_\lambda p_{n_1,n_2}(\partial,\lambda)\leq 1$ , which implies that $a_{n_1+n_2}=a_{n_1}+a_{n_2}-1-deg_\lambda p_{n_1,n_2}(\partial,\lambda)>a_{n_1}$. It is a contradiction. Thus $\mathcal{L}_{sup}$ can be regarded as a current subalgebra for an infinite dimensional abelian Lie algebra.

\noindent {\bf Case II:} $m=2$. With a similar analysis as that in Case I, one can see that $\mathbb{C}[\partial]L_0 \oplus \mathcal{L}_{sup}$ is a map Virasoro Lie conformal algebra.

\noindent{\bf Case III:} $j\leq 1$. For any $L_{n_1}$, $L_{n_2} \in \mathcal{L}_{inf}$, if $p_{n_1,n_2}(\partial, \lambda)\neq 0$, $deg_\lambda p_{n_1,n_2}(\partial,\lambda)=0$. Thus $\mathcal{L}_{inf}$ is a current subalgebra associated with an infinite dimensional Lie algebra.

\noindent {\bf Case VI:} $j>1$. Assume that $a_1=2-\frac{2}{q}$ for some positive odd number $q$. Since $j\in \mathbb{Z}a_1+\mathbb{Z}$, $j=1+\frac{n}{q}$ for some positive number $n$. Set $a_k=j$ for some $k>N$. Then by Proposition {\ref{key}}, $deg_\lambda p_{1,k}(\partial, \lambda)=0$. Therefore, $m \geq a_{k+1}= a_1+a_k-1-deg_\lambda p_{1,k}(\partial, \lambda)=a_1+j-1=2-\frac{2-n}{q}$. If $m<2$, then $n=1$ and $m=2-\frac{1}{q}$. By Proposition {\ref{key}} , for any $s>N$,
$$ deg_\lambda p_{1,s}(\partial,\lambda)=\left\{
\begin{array}{rcl}
0, & &a_s=j,\\
1, & & \text{else}.
\end{array}\ \right.$$
Thus for any $s>N$, $a_s=2-\frac{2n_s+1}{q}$ for some integer $n_s$. In particular, $j=1+\frac{1}{q}=2-\frac{2n_j+1}{q}$ for some integer $n_j$, which is impossible.
As a consequence, by Lemma {\ref{key1}}, $a_1=2-\frac{1}{p}$ for some positive integer $p$. Then $j\geq 1+\frac{1}{p}$ and $m\geq a_1+j-1\geq 2$. Then it reduces to Case I and Case II.
\end{proof}
\begin{corollary}{\label{sxc1}} $\mathcal{L}$ does not have non-trivial faithful finite modules.\end{corollary}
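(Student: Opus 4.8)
The plan is to suppose, for contradiction, that $V$ is a non-trivial faithful finite $\mathcal{L}$-module and to split into two complementary cases according to the weights $(a_i,b_i)$ in $[{L_0}_\lambda L_i]=(\partial+a_i\lambda+b_i)L_i$. Since $V$ is faithful and $L_0\neq 0$, the subalgebra $Vir=\mathbb{C}[\partial]L_0$ acts non-trivially on $V$, so $V$ is a non-trivial $Vir$-module; as $\text{Tor}(V)$ is a trivial submodule, I would isolate it and work with the free part $\bar V=V/\text{Tor}(V)$. Recall, as noted after Proposition~\ref{psx8}, that $b_i=ib_1$. Consequently the set of pairs $\{(a_i,b_i)\mid i\in\mathbb{Z}^+\}$ is \emph{finite} precisely when $b_1=0$ and $\{a_i\}$ takes finitely many values, and is \emph{infinite} otherwise (if $b_1\neq 0$, already $b_i=ib_1$ runs through infinitely many values). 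This is exactly the dichotomy the earlier results are designed to resolve.

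In the finite case, the hypotheses of Proposition~\ref{psx5} hold, so $\mathcal{L}$ contains a subalgebra isomorphic either to $Vir\ltimes_a\text{Cur}\mathfrak{g}$ with $\mathfrak{g}$ infinite dimensional, or to $\mathcal{V}(\mathbb{A})$ with $\mathbb{A}$ infinite dimensional. Restricting $V$ to this subalgebra yields a finite module over it, which by Proposition~\ref{psx6} (respectively Proposition~\ref{t4}(2)) cannot be faithful. Hence there is a non-zero element of the subalgebra, and a fortiori of $\mathcal{L}$, acting as zero on $V$, contradicting faithfulness. This step needs no freeness assumption, since Propositions~\ref{psx6} and~\ref{t4} rule out faithfulness for \emph{all} finite modules.

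In the infinite case, I would re-run the argument proving Proposition~\ref{p3} directly on $V$. The key point is that that proof uses only the $Vir$-action together with the brackets $[{L_0}_\lambda L_i]=(\partial+a_i\lambda+b_i)L_i$; it never invokes the mutual brackets $[{L_i}_\lambda L_j]$, so it transfers verbatim to any finite $\mathcal{L}$-module with $L_i$ in the role of $J_{(a_i,b_i)}$. The conclusion is that there is a finite set $F\subset\mathbb{C}\times\mathbb{C}$ such that $L_i$ can act non-trivially on $\bar V$ only when $(a_i,b_i)\in F$. Since $\{(a_i,b_i)\}$ is infinite while $F$ is finite, infinitely many $L_i$ act trivially on $\bar V$, which together with faithfulness will force a contradiction.

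The main obstacle is converting ``trivial on $\bar V$'' into ``zero on $V$'' when $\text{Tor}(V)\neq 0$, since the former only gives ${L_i}_\lambda V\subseteq\text{Tor}(V)[\lambda]$. I would exploit the bracket structure: if $L_i$ and $L_j$ both map $V$ into the trivial submodule $\text{Tor}(V)$, then the module axiom ${L_i}_\lambda({L_j}_\mu v)=[{L_i}_\lambda L_j]_{\lambda+\mu}v+{L_j}_\mu({L_i}_\lambda v)$, combined with the fact that $\text{Tor}(V)$ is annihilated by $\mathcal{L}$, shows both sides with $v\in V$ vanish outside $[{L_i}_\lambda L_j]_{\lambda+\mu}v$, whence $[{L_i}_\lambda L_j]=p_{i,j}(\partial,\lambda)L_{i+j}$ acts as zero on $V$; if $p_{i,j}\neq 0$ this produces a non-zero $g(\partial)L_{i+j}\in\mathcal{L}$ killing $V$. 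The hypothesis $[{L_1}_\lambda L_i]\neq 0$ is what makes triviality propagate ($i$ trivial on $\bar V$ forces $i+1$ trivial on $\bar V$), so the trivially-acting indices form a cofinite tail, and one then only needs a single pair in this tail with non-zero bracket. The genuinely delicate residual configuration is an abelian tail (all $p_{i,j}=0$), where I would instead bound $\deg_\lambda({L_i}_\lambda v)$ uniformly via the $L_0$-relation and extract a non-trivial $\mathbb{C}$-linear combination $\sum c_iL_i$ whose finitely many $\text{Tor}(V)$-valued coefficients all vanish, again contradicting faithfulness. Verifying that the proof of Proposition~\ref{p3} transfers unchanged, and organizing this torsion reduction, are the only non-routine points; everything else is a direct appeal to the results established above.
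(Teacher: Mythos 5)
Your proof follows the paper's argument exactly: the same dichotomy on whether the set of pairs $\{(a_i,b_i)\}$ is finite, with Proposition~\ref{p3} disposing of the infinite case and Propositions~\ref{psx5}, \ref{psx6} and \ref{t4} disposing of the finite case. The only divergence is your lengthy torsion discussion, which is unnecessary: the $\mu$-degree comparison already used twice inside the proof of Proposition~\ref{p3} shows directly that ${L_i}_\lambda V\subseteq \text{Tor}(V)[\lambda]$ forces ${L_i}_\lambda V=0$, so the ``abelian tail'' configuration never needs separate treatment.
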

\begin{proof}Suppose that $V$ is a non-trivial finite irreducible $\mathcal{L}$-module. Notice that $b_n=nb_1$ for any $n$. By Proposition {\ref{p3}}, it is clear when $\{a_i\}$ has infinitely many different values or $b_1\neq 0$. If $\{a_i\}$ has finitely many different values and $b_1=0$, by Proposition {\ref{psx5}}, $\mathcal{L}$ contains a subalgebra $\mathcal{T}$, where $\mathcal{T}$ is either $Vir \ltimes_a \text{Cur}\mathfrak{g}$ or $\mathcal{V}(\mathbb{A})$ for some infinite dimensional Lie algebra $\mathfrak{g}$ and some infinite dimensional commutative associative algebra $\mathbb{A}$. Now by Propositions \ref{psx6} and \ref{t4}, the action of $\mathcal{T}$ on $V$ is not faithful.
\end{proof}

\begin{lemma}{\label{sxc2}}Any non-zero ideal of $\mathcal{L}$ must be of cofinite rank. \end{lemma}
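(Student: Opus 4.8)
The plan is to prove cofiniteness directly from the $\mathbb{Z}^+$-grading, working over the fraction field $\mathbb{C}(\partial)$ so that ``rank'' becomes ordinary dimension. Write $\widehat{\mathcal{L}}=\mathbb{C}(\partial)\otimes_{\mathbb{C}[\partial]}\mathcal{L}=\bigoplus_{i\ge0}\mathbb{C}(\partial)L_i$ and $\widehat{\mathcal{I}}=\mathbb{C}(\partial)\otimes_{\mathbb{C}[\partial]}\mathcal{I}$; since localization is exact it suffices to show $\dim_{\mathbb{C}(\partial)}\widehat{\mathcal{L}}/\widehat{\mathcal{I}}<\infty$. For a nonzero $y=\sum_{j}g_j(\partial)L_j\in\mathcal{L}$ I call $\max\{j: g_j\ne0\}$ its \emph{top index} and the corresponding $g_j$ its \emph{leading coefficient}. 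The whole argument rests on one observation: bracketing with $L_1$ raises the top index by exactly one while keeping the leading coefficient nonzero, which is precisely where the standing hypothesis $[{L_1}_\lambda L_i]\ne0$ (i.e. $p_{1,i}\ne0$ for all $i$) enters.

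First I would record the propagation step. Let $T\subseteq\mathbb{Z}^+$ be the set of top indices of nonzero elements of $\mathcal{I}$; as $\mathcal{I}\ne0$, some $i_0\in T$. Given $y\in\mathcal{I}$ with top index $i$ and leading coefficient $g_i\ne0$, conformal sesquilinearity gives that the $L_{i+1}$-component of $[{L_1}_\lambda y]$ equals $g_i(\partial+\lambda)\,p_{1,i}(\partial,\lambda)$, a nonzero element of $\mathbb{C}[\partial,\lambda]$ because $p_{1,i}\ne0$. Choosing the top power of $\lambda$ occurring in this product, the corresponding $\lambda$-coefficient of $[{L_1}_\lambda y]$ is an element of $\mathcal{I}$ (ideals are closed under all $j$-th products) whose top index is exactly $i+1$. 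Hence $i\in T\Rightarrow i+1\in T$, so $T\supseteq\{i_0,i_0+1,i_0+2,\dots\}$.

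To conclude I would run a triangular elimination over the field $\mathbb{C}(\partial)$. For each $i\ge i_0$ pick $y_i\in\mathcal{I}$ with top index $i$ and leading coefficient $d_i\ne0$; in $\widehat{\mathcal{L}}$ the scalar $d_i$ is invertible and $y_i=d_iL_i+(\text{terms of index}<i)$. Back-substitution downward from index $i$ then shows, by induction on $i\ge i_0$, that $L_i\in\widehat{\mathcal{I}}+\bigoplus_{l<i_0}\mathbb{C}(\partial)L_l$. Consequently $\widehat{\mathcal{L}}=\widehat{\mathcal{I}}+\bigoplus_{l<i_0}\mathbb{C}(\partial)L_l$ and $\dim_{\mathbb{C}(\partial)}\widehat{\mathcal{L}}/\widehat{\mathcal{I}}\le i_0$, i.e. $\mathcal{I}$ has cofinite rank.

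The step I expect to be the genuine obstacle is conceptual rather than computational: $\mathcal{I}$ need not be a graded ideal, so one cannot simply project onto homogeneous components, and the adjoint $Vir=\mathbb{C}[\partial]L_0$-action is too weak to separate the grading in the delicate case $b_1=0$ with repeated $a_i$ — for instance in $\mathcal{V}(\mathbb{C}[T])$ every $a_i=2$, so $L_0$ assigns the same weight to all $L_i$, and bracketing by any single generator merely shifts the support. Organizing everything around the top index and passing to $\mathbb{C}(\partial)$ is precisely the device that sidesteps this: it never requires isolating a homogeneous element, only that $L_1$ shifts the top index with a leading factor that is invertible over $\mathbb{C}(\partial)$, which is guaranteed by $p_{1,i}\ne0$.
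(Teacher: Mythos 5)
Your proof is correct and follows essentially the same route as the paper's: both take a nonzero element of the ideal, use the hypothesis $p_{1,i}\neq 0$ together with sesquilinearity to push its top index up by one via the $j$-th products $L_1{}_{(j)}\cdot$, and conclude that all but finitely many generators $L_n$ become torsion (equivalently, redundant over $\mathbb{C}(\partial)$) modulo the ideal. Your write-up merely makes explicit the inductive propagation and the passage to $\mathbb{C}(\partial)$ that the paper's shorter argument leaves implicit.
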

\begin{proof} Let $I$ be a non-zero ideal of $\mathcal{L}$ and $\overline{L}_i$ be the image of $L_i$ in $\mathcal{L}/I$. Suppose $\sum_{i=1}^N q_i(\partial)L_i$ is a non-zero element of $I$ with $q_N(\partial)\neq 0$. For any $n \geq N$, since $[{L_1}_\lambda L_n ] \neq 0$, there exists $f_{n}(\partial)\neq 0$ such that $f_{n}(\partial)\overline{L_{n}} \in \bigoplus_{i=1}^{N} \mathbb{C}[\partial]\overline{L_i}$. Consequently, $rank({\mathcal{L}/I})\leq rank((\bigoplus_{i=1}^{N} \mathbb{C}[\partial]L_i+I)/I)\leq N$.
\end{proof}
\begin{lemma}{\label{sxbc1}}There is no epimorphism of Lie conformal algebras from $\mathcal{L}$ to $Vir\ltimes_1 \text{Cur}\mathfrak{g}$ , where $\mathfrak{g}$ is a finite dimensional simple Lie algebra.
\end{lemma}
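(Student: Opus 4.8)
The plan is to argue by contradiction: assume a surjective homomorphism $\phi:\mathcal L\to\mathcal T:=Vir\ltimes_1\text{Cur}\mathfrak g$ exists, write $\tilde L$ for the Virasoro generator of $\mathcal T$, and exploit that $\mathcal L$ is generated, as a Lie conformal algebra, by only $L_0$ and $L_1$. First I would pin down $\phi(L_0)$. Since $[{L_0}_\lambda L_0]=(\partial+2\lambda)L_0$, its image $w:=\phi(L_0)$ satisfies $[w_\lambda w]=(\partial+2\lambda)w$; writing $w=g(\partial)\tilde L+c$ with $c\in\text{Cur}\mathfrak g$ and comparing $\tilde L$-components gives $g(-\lambda)g(\partial+\lambda)=g(\partial)$, whose leading-term analysis forces $g$ to be a constant in $\{0,1\}$. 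If $g=0$, a degree count in the $\text{Cur}\mathfrak g$-bracket forces $c=0$, and then $[{L_0}_\lambda L_i]=(\partial+a_i\lambda+b_i)L_i$ together with $\partial+a_i\lambda+b_i\neq 0$ yields $\phi(L_i)=0$ for all $i$, contradicting surjectivity. Hence $\phi(L_0)=\tilde L+c_0$ is a Virasoro element and $Vir_0:=\mathbb C[\partial]\phi(L_0)\cong Vir$.

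Next I would regard $\mathcal T$ as a finite adjoint $Vir_0$-module. Each $\phi(L_i)$ is then a weight vector: $\phi(L_0)_\lambda\phi(L_i)=(\partial+a_i\lambda+b_i)\phi(L_i)$, so $\mathbb C[\partial]\phi(L_i)\cong M_{a_i,b_i}$ and $\phi(L_i)$ has $\phi(L_0)_{(1)}$-weight $a_i$ and central charge $b_i$. Because any $M_{a,b}$ embedded in the finite module $\mathcal T$ has $b$ among the finitely many central charges of its composition factors, while $b_i=ib_1$ (as recorded just before the lemma), I obtain $b_1=0$ and hence all $b_i=0$. I would then normalize the Virasoro complement, showing that one may take $c_0=0$, i.e. $\phi(L_0)=\tilde L$. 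With this normalization the weight-$a$ space of $\text{Cur}\mathfrak g$ is exactly $\partial^{a-1}\otimes\mathfrak g$, so the current part of the weight vector $\phi(L_1)$ lies in a single line $\mathbb C W_1\subset\mathfrak g$; that is, $\phi(L_1)\in\mathbb C[\partial]\tilde L\oplus\mathbb C[\partial]W_1$.

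The contradiction then comes from a rank count. The hypothesis $[{L_1}_\lambda L_i]\neq 0$ for all $i$, together with the fact that the leading $\lambda$-coefficient of $p_{1,i}$ is a nonzero scalar (Proposition \ref{key}, applicable since each $a_i\geq 1$), shows inductively that $L_{i+1}\in\langle L_0,L_1\rangle$, so $\mathcal L$ is generated by $L_0$ and $L_1$; hence $\mathcal T=\phi(\mathcal L)$ is generated by $\phi(L_0)=\tilde L$ and $\phi(L_1)$. But $R:=\mathbb C[\partial]\tilde L\oplus\mathbb C[\partial]W_1$ is already a Lie conformal subalgebra, since $[\tilde L_\lambda\tilde L]\in\mathbb C[\partial]\tilde L$, $[\tilde L_\lambda f(\partial)W_1]\in\mathbb C[\partial]W_1$, and $[f(\partial)W_1{}_\lambda h(\partial)W_1]\in\text{Cur}[W_1,W_1]=0$. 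As both generators lie in $R$, we get $\mathcal T=\phi(\mathcal L)\subseteq R$, so $\text{rank}\,\mathcal T\leq 2$. This contradicts $\text{rank}\,\mathcal T=1+\dim\mathfrak g\geq 4$, a finite-dimensional simple Lie algebra having dimension at least $3$.

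The main obstacle I expect is the normalization $c_0=0$. The self-bracket relation alone does not force the current part of $\phi(L_0)$ to vanish — for instance $\tilde L+\partial X$ is again a Virasoro element for any $X\in\mathfrak g$ — so one must genuinely remove $c_0$ by an automorphism of $\mathcal T$ that adjusts the Virasoro complement while fixing the ideal $\text{Cur}\mathfrak g$, equivalently trivializing the relevant class in $H^1(Vir,\text{Cur}\mathfrak g)$. Establishing that every Virasoro element $\tilde L+c_0$ is $\text{Aut}(\mathcal T)$-conjugate to $\tilde L$, and that after this conjugation the current direction of $\phi(L_1)$ is genuinely one-dimensional, is where the weight/finiteness input (Proposition \ref{pweight}) and the simplicity of $\mathfrak g$ must be combined. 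Everything else in the argument is bookkeeping with weights and $\partial$-degrees.
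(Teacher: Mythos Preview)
Your approach diverges substantially from the paper's, and the step you flag as the ``main obstacle'' is in fact a genuine gap that your proposed fix does not close.

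You want to conjugate $\phi(L_0)=\tilde L+c_0$ to $\tilde L$ by an automorphism of $\mathcal T=Vir\ltimes_1\text{Cur}\mathfrak g$. But automorphisms of $\mathcal T$ are rigid: since $\text{Cur}\mathfrak g$ is the unique nonzero proper ideal, any automorphism $\alpha$ preserves it and induces the identity on $\mathcal T/\text{Cur}\mathfrak g\cong Vir$, so $\alpha(\tilde L)=\tilde L+e$ for some $e\in\text{Cur}\mathfrak g$. Take your own example $e=\partial X$; compatibility with $[\tilde L_\lambda y]=(\partial+\lambda)y$ forces $[\alpha(\tilde L)_\lambda\alpha(y)]=(\partial+\lambda)\alpha(y)$, and if one tries $\alpha|_{\mathfrak g}=\mathrm{id}$ this gives $(\partial+\lambda)y-\lambda[X,y]=(\partial+\lambda)y$, i.e.\ $[X,y]=0$ for all $y$, impossible for simple $\mathfrak g$. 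More elaborate choices of $\alpha|_{\text{Cur}\mathfrak g}$ run into the same obstruction unless one proves a conformal Levi-type theorem (vanishing of the relevant $H^1$), which you neither prove nor cite. Without $c_0=0$, the $\phi(L_0)_{(1)}$--weight spaces of $\text{Cur}\mathfrak g$ are not simply $\partial^{a-1}\otimes\mathfrak g$, so the conclusion that the current part of $\phi(L_1)$ lies in a single line $\mathbb C[\partial]W_1$ is unjustified, and the rank-$2$ subalgebra $R$ you need does not visibly exist. (A smaller issue: your argument for $b_1=0$ tacitly assumes infinitely many $\phi(L_i)\neq 0$.)

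The paper's proof avoids all of this by a one-line structural argument. It uses that $\mathcal T$ has exactly one nonzero proper ideal, namely $\text{Cur}\mathfrak g$, while $\mathcal L$ carries the infinite descending chain of graded ideals $\mathcal L_{>i}=\bigoplus_{j>i}\mathbb C[\partial]L_j$. Pushing these forward under an epimorphism $\rho$ and comparing with the ideal lattice of $\mathcal T$ forces $\mathcal L_{>1}\subseteq\ker\rho$, hence $\mathrm{rank}(\mathcal L/\ker\rho)\leq 2$, contradicting $\mathrm{rank}\,\mathcal T=1+\dim\mathfrak g\geq 4$. No analysis of $\phi(L_0)$, no weight spaces, and no normalization are needed.
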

\begin{proof} Let $\rho: \mathcal{L} \to Vir\ltimes_1 \text{Cur}\mathfrak{g}$ be any epimorphism. Then $\mathcal{L}/ker\rho \cong Vir\ltimes_1 \text{Cur}\mathfrak{g}$. By \cite[Proposition 13.7]{BDK}, the only non-zero proper ideal of $Vir\ltimes_1 \text{Cur}\mathfrak{g}$ is $\text{Cur}\mathfrak{g}$. However, for each $i>0$, $(\mathcal{L}_{>i}+ker\rho)/ker\rho$ is an proper ideal of $\mathcal{L}/ker\rho$, where $\mathcal{L}_{>i}=\bigoplus_{j>i}\mathbb{C}[\partial]L_j$. Thus
$\mathcal{L}_{>1}\subseteq ker\rho$. As a consequence, $rank(\mathcal{L}/ker\rho)\leq 2$, which is impossible.
\end{proof}
\begin{theorem}Any non-trivial finite irreducible $\mathcal{L}$-module $V$ must be free of rank one. Moreover, suppose $V=\mathbb{C}[\partial]v$ is a rank one non-trivial irreducible $\mathcal{L}$-module. Then \begin{enumerate}
\item if $a_1\neq 2$, then \[ {L_0}_\lambda v=(\partial+\Delta\lambda +c)v,\ \ {L_1}_\lambda v=\gamma v,\ \ {L_i}_\lambda v=0,\ \forall\ i>1, \]
where $\Delta, c,\gamma \in \mathbb{C}$. In addition, $\gamma\neq 0$ only if $a_1=1$ and $\Delta\neq 0$ when $\gamma=0$.
\item if $a_1=2$, then \[ {L_0}_\lambda v=(\partial+\Delta\lambda +c)v, \ {L_i}_\lambda v=c_i(\partial+\Delta\lambda +c)v,\ \ \forall\ i\geq 1, \]
where $\Delta\neq 0, c_i\in \mathbb{C}$.
\end{enumerate} \end{theorem}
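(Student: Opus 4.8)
The plan is to pass to the finite quotient $\mathcal{L}/\mathrm{Ann}(V)$ and then invoke the Cartan--Jacobson theorem. Let $V$ be a non-trivial finite irreducible $\mathcal{L}$-module and set $J=\mathrm{Ann}_{\mathcal{L}}(V)=\{a\in\mathcal{L}\mid a_\lambda V=0\}$, which is an ideal. By Corollary \ref{sxc1} the action is not faithful, so $J\neq 0$; by Lemma \ref{sxc2} the ideal $J$ has cofinite rank, hence $\bar{\mathcal{L}}:=\mathcal{L}/J$ is a \emph{finite} Lie conformal algebra on which $V$ is a faithful finite irreducible module. Since $\mathrm{Tor}(\bar{\mathcal{L}})$ acts trivially and $V$ is faithful, $\bar{\mathcal{L}}$ is torsion-free. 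First I would check that $L_0$ acts non-trivially: if $L_0{}_\lambda V=0$, then feeding $[{L_0}_\lambda L_i]=(\partial+a_i\lambda+b_i)L_i$ into the Jacobi identity gives $(-(\lambda+\mu)+a_i\lambda+b_i)(L_i{}_{\lambda+\mu}V)=0$, and comparing degrees in $\mu$ forces $L_i{}_\lambda V=0$ for every $i$ (recall $I_1=\emptyset$ here), making $V$ trivial. Thus the image $\bar L_0$ is a non-zero torsion-free element with $[{\bar L_0}_\lambda \bar L_0]=(\partial+2\lambda)\bar L_0$, so $\bar{\mathcal{L}}$ contains a copy of $Vir$.

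Next I would identify $\bar{\mathcal{L}}$. A short degree count shows $\mathrm{Cur}\,\mathfrak{g}$ has no Virasoro element: evaluating $[w_\lambda w]=(\partial+2\lambda)w$ at $\lambda=0$ gives $w_{(0)}w=\partial w$, whose right-hand side has strictly larger $\partial$-degree than the left unless $w=0$. Hence by Proposition \ref{cj} we must be in the case $\bar{\mathcal{L}}\cong Vir\ltimes_1\mathrm{Cur}\,\mathfrak{g}$ with $\mathfrak{g}$ reductive of at most one-dimensional center. If $\mathfrak{g}$ had a non-zero simple ideal $\mathfrak{s}$, quotienting by the current algebra of a complementary ideal would yield an epimorphism $\mathcal{L}\twoheadrightarrow Vir\ltimes_1\mathrm{Cur}\,\mathfrak{s}$, contradicting Lemma \ref{sxbc1}. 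Therefore $\mathfrak{g}$ is abelian of dimension $\le 1$, i.e. $\bar{\mathcal{L}}\cong Vir$ or $\bar{\mathcal{L}}\cong Vir\ltimes_1\mathrm{Cur}\,\mathbb{C}$. In both cases the classification of irreducible $Vir\ltimes_a\mathrm{Cur}\,\mathfrak{g}$-modules (Proposition \ref{p1} and the preceding lemma, with $U$ one-dimensional) forces $V$ to be free of rank one, which settles the first assertion.

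For the explicit action, write $V=\mathbb{C}[\partial]v$, $\ {\bar L_0}_\lambda v=(\partial+\Delta\lambda+c)v$, and, when the current ideal is present, $\bar x{}_\lambda v=\chi v$ with $\chi\neq 0$ by faithfulness and $[{\bar L_0}_\lambda \bar x]=(\partial+\lambda)\bar x$ (the $a=1$ of $Vir\ltimes_1$). The key computation is to expand each $\bar L_i=f_i(\partial)\bar L_0+g_i(\partial)\bar x$ and substitute into the inherited relation $[{\bar L_0}_\lambda \bar L_i]=(\partial+a_i\lambda+b_i)\bar L_i$. Separating the $\bar L_0$- and $\bar x$-components and comparing $\lambda$-degrees shows that $f_i,g_i$ are necessarily \emph{constants}, with $f_i\neq 0\Rightarrow (a_i,b_i)=(2,0)$ and $g_i\neq 0\Rightarrow (a_i,b_i)=(1,0)$; in particular each $L_i$ acts on $v$ in total degree $\le 1$. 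This constant-coefficient conclusion is exactly what rules out the higher-degree (e.g. degree-two, $a_i=3$, $\Delta=1$) solutions of (\ref{esx3}) that Proposition \ref{key} would otherwise permit.

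It then remains to read off the three regimes. When $a_1=2$, an induction using Proposition \ref{key}, skew-symmetry (which forces $(\partial+2\lambda)\mid p_{1,1}$, so $\deg_\lambda p_{1,1}\ge 1$) and $[{L_1}_\lambda L_i]\neq 0$ gives $a_i=2$ for all $i$; hence $\bar L_i=c_i\bar L_0$ and ${L_i}_\lambda v=c_i(\partial+\Delta\lambda+c)v$, and irreducibility excludes $\Delta=0$ since otherwise $(\partial+c)V$ would be a proper submodule. When $a_1=1$, the divisibility $(\partial+2\lambda)\mid p_{1,1}$ gives $a_2\le 0$, whence $a_i<1$ and $\bar L_i=0$ for all $i\ge 2$, while $\bar L_1=g_1\bar x$ yields ${L_1}_\lambda v=\gamma v$ with $\gamma=g_1\chi$ (possibly non-zero precisely because $a_1=1$). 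When $a_1\neq 1,2$, the relations force $\bar L_1=0$; since $J$ is an ideal and $[{L_1}_\lambda L_{j-1}]\neq 0$, this propagates to $\bar L_j=0$ for all $j\ge 1$, so $V$ is a pure Virasoro module with $\gamma=0$ and $\Delta\neq 0$. The hard part I expect is precisely this last bookkeeping: one must combine the degree constraints coming from the Cartan--Jacobson normal form with the monotonicity/sign information about $\{a_i\}$ and the non-vanishing hypothesis $[{L_1}_\lambda L_i]\neq 0$ to guarantee that no stray constant action ($a_i=1$, $i>1$) or higher-degree action survives outside the two asserted cases.
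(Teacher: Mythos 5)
Your proposal is correct and follows essentially the same route as the paper: reduce to the finite faithful quotient $\mathcal{L}/\mathrm{Ann}(V)$ via Corollary \ref{sxc1} and Lemma \ref{sxc2}, identify it as $Vir$ or $Vir\ltimes_1\mathrm{Cur}\,\mathbb{C}$ using the Cartan--Jacobson theorem together with Lemma \ref{sxbc1}, and then read off the rank-one action. Your final bookkeeping (expanding $\bar L_i$ in the basis $\{\bar L_0,\bar x\}$ and using the monotonicity of $\{a_i\}$) is slightly more explicit than, but equivalent to, the paper's computation of $\rho(\bar L_0),\rho(\bar L_1)$; the only imprecision is the claim that $a_1=2$ forces $a_i=2$ for all $i$, which is stronger than needed --- what matters, and what does follow, is that no $a_i$ equals $1$, so the current part is absent and each $\bar L_i$ is a scalar multiple (possibly zero) of $\bar L_0$.
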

\begin{proof} Giving a non-trivial finite $\mathcal{L}$-module $V$ is equivalent to giving a non-trivial Lie conformal algebra homomorphism $\phi: \mathcal{L} \to gcV$.
Let $\overline{\mathcal{L}}=\mathcal{L}/ker\phi$ and $\overline{L_i}=\phi(L_i)$. By Corollary \ref{sxc1} and Lemma \ref{sxc2}, $\overline{\mathcal{L}}$ is finite. By \cite[Corollary 13.7]{BDK}, there are no non-trivial homomorphisms of Lie conformal algebras between $Vir$ and current Lie conformal algebras. Since $V$ is faithful and irreducible as an $\overline{\mathcal{L}}$-module, by Proposition \ref{cj} and Lemma \ref{sxbc1}, there exists an isomorphism $\rho: \overline{\mathcal{L}} \to Vir \ltimes_1 \text{Cur}\mathfrak{g}$, where $\mathfrak{g}$ is either of dimension one or zero. Thus, any non-trivial finite irreducible $\mathcal{L}$-module $V$ must be free of rank one. Next, we shall describe the action of $\mathcal{L}$ on $V$ by portraying the isomorphism $\rho$.
If $\mathfrak{g}$ is zero, for each $i$, since $\overline{\mathcal{L}}$ is free as a $\mathbb{C}[\partial]$-module, $\overline{L_i}=c_i\overline{L_0}$ for some $c_i \in \mathbb{C}$. In addition, it is clear that if $c_i\neq 0$ then $a_i=2$. Now, assume $\mathfrak{g}$ is non-zero with the generator $x$.
Since $\overline{L_1}$ is contained in a proper ideal of $\overline{\mathcal{L}}$, $\rho(\overline{L_1})=q(\partial)x$ and $\rho(\overline{L_0})=f(\partial)L+g(\partial)x$ for some $q(\partial)\neq 0$, $f(\partial)$ and $g(\partial) \in \mathbb{C}[\partial]$. Then
$ \rho([\bar{L_0}_\lambda \bar{L_1}]=[\rho(\overline{L_0})_\lambda \rho(\overline{L_1})] $
implies
\begin{equation}{\label{eex1}}(\partial+a_1\lambda)q(\partial)x=f(-\lambda)q(\partial+\lambda)(\partial+\lambda)x.
\end{equation}
Comparing the degree of $\lambda$ in both sides of (\ref{eex1}), we have $f(\partial)=1$ and $a_1=1$ and $q(\partial) \in \mathbb{C}$. Since $p_{1,i}(\partial, \lambda)\neq 0$ for any $i>1$, $\overline{L_i}=0$. Then this theorem follows by Proposition \ref{p1} and Lemma \ref{sxl1}.
\end{proof}

\end{document}